\numberwithin{equation}{section}
\newtheorem{theorem}{Theorem}[section]
\newtheorem{assumption}[theorem]{Assumption}
\newtheorem{example}[theorem]{Example}
\newtheorem{lemma}[theorem]{Lemma}
\newtheorem{proposition}[theorem]{Proposition}
\newtheorem{remark}[theorem]{Remark}
\numberwithin{equation}{section}
\newcommand{\dd}{\mathrm{d}}
\newcommand{\dx}{\mathrm{d} x}
\renewcommand{\d}{\mathrm{d}}
\newcommand{\BC}{\mathrm{BC}}
\newcommand{\R}{\mathbb{R}}
\definecolor{QG}{named}{blue}
\newcommand{\writefoot}[1]{
    \renewcommand{\thefootnote}{}
    \footnotetext{\hspace{-16.5pt}\scriptsize#1}
    \renewcommand{\thefootnote}{\arabic{footnote}}
}
\begin{document}

\writefoot{\small \textbf{AMS subject classifications (2020).} Primary: 92D25; Secondary: 34K30, 34A12, 34A35. \smallskip
}
\writefoot{\small \textbf{Keywords.} 
Transfer processes, population dynamics, cooperative and competitive transfers, integro-differential equations.\smallskip
}
\writefoot{\small \textbf{Acknowledgements:} QG acknowledges support from ANR via the project Indyana under grant agreement ANR-21-CE40-0008.}

\begin{center}
    \begin{minipage}{0.8\textwidth}
	\centering
    \LARGE{\bf Robin Hood model versus Sheriff of Nottingham model: transfers in population dynamics}\bigskip
    \end{minipage}

    \Large
    Quentin Griette\medskip \\
    \normalsize
    {\it 
Université Le Havre Normandie, Normandie Univ.,\\
LMAH UR 3821, 76600 Le Havre, France. 
 \\ }
    {\tt quentin.griette@univ-lehavre.fr}
    \bigskip

    \Large
	Pierre Magal$^\dag$ \writefoot{$^\dag$  Pierre Magal was at the origin of this research and contributed to all results in this study, as well as the writing of the initial manuscript.
    Sadly, he passed away before the reviewing process was complete.}\\
    \normalsize
    {\it Department of Mathematics, Faculty of Arts and Sciences , Beijing
Normal University, Zhuhai, 519087, China.\\
Univ. Bordeaux, IMB, UMR 5251, F- 33400 Talence, France. \\
    CNRS, IMB, UMR 5251, F- 33400 Talence, France.\\ }
    {\tt pierre.magal@u-bordeaux.fr}
	\bigskip

	\today
\end{center}



\begin{abstract} We study the problem of transfers in a population structured by a continuous variable corresponding to the quantity being transferred.
    The model takes the form of an integro-differential equations with kernels corresponding to the specific rules of the transfer process.
    We focus our interest on the well-posedness of the Cauchy problem in the space of measures. We characterize transfer kernels that give a continuous semiflow in the space of measures and derive a necessary and sufficient condition for the stability of the space $L^1$ of integrable functions.  We construct some examples of kernels that may be particularly interesting in economic applications. Our model considers blind transfers of economic value (e.g. money) between individuals. The two models are the ``Robin Hood model'', where the richest individual unconditionally gives a fraction of their wealth to the poorest when a transfer occurs, and the other extreme, the ``Sheriff of Nottingham model'', where the richest unconditionally takes a fraction of the poorest's wealth. Between these two extreme cases is a continuum of intermediate models obtained by interpolating the kernels. We illustrate those models with numerical simulations and show that any small fraction of the ``Sheriff of Nottingham'' in the transfer rules leads to a segregated population with extremely poor and extremely rich individuals after some time. Although our study is motivated by economic applications, we believe that this study is a first step towards a better understanding of many transfer phenomena occurring in the life sciences.
\end{abstract}



\section{Introduction}

Transfer phenomena are fundamental processes in population dynamics, influencing a wide range of biological and social systems. These phenomena manifest in various forms, such as predation, parasitism, cooperation, and sexual reproduction, where the transfer typically involves energy, proteins, or genetic material. Social interactions, including opinion formation and economic exchanges, can also be viewed as transfers, with economic transactions being the primary focus of this study. 

\medskip


Their mathematical formulations have a long history. In the context of mathematical physics,  a fundamental  approach has been proposed by Boltzmann, in which interacting particles are viewed as members of a continuum of population density.  In these models transfer of physical quantities from one particle to another is modulated by a kernel function that specifies the transfer process. Many examples have been explored, and a review is found in Perthame \cite{perthame2002kinetic}.  While these models share a structural similarity with the ones we propose here, Boltzmann-type models include a kinetic term linking the position and speed of particles, which makes their analysis quite intricate; here the models we propose do not include such term and we will analyze them by other methods.
\smallskip

In populations dynamics, the transfer of genetic material has been increasingly studied in the past decades. In Magal and Webb \cite{magal2000mutation} and Magal \cite{Magal2002b}, the authors introduced a model devoted to transfers of genetic material in which parent cells exchange genetic material to form their offspring. While they relied on the example of \textit{Helicobacter pyroli}, these models are applicable to a much wider range of species subject to mutation, selection and recombination. More recently, several authors have studied population dynamics model including a mutation and selection (among which  \cite{Desvillettes-Jabin-Mischler-Raoul-2008,Barles-Mirrahimi-Perthame-2009,Canizo-Carrillo-Cuadrado-2013,Burie-Ducrot-Griette-Richard-2020, Calvez-etal-ESAIM}), and models derived from the \textit{Fisher infinitesimal model} that provides a microscopic basis for transfer models of sexual reproduction \cite{barton2017infinitesimal, Calvez-Lepoutre-Poyato-2024} have also attracted some attention.  Raoul \cite{Raoul-2017, Raoul-2021}, in particular, studied several population models which have a kinetic description of the population dynamics.
\medskip

Some social phenomena can also be studied by the use of transfer models.  
Such a model is sometimes called ``kinetic model'' even in the absence of an actual kinetic term;  the literature around kinetic models for opinion formation is developed \cite{Motsch-Tadmor-2014, Giambiagi_Ferrari-etal-2021, Zanella-2023}. These models have also been proposed to model economic phenomena, where the quantity being transferred is an abstract value (money, stocks, etc.)  \cite{Chakraborti-Chakrabarti-2000, Cordier-Pareschi-Toscani-2005,During-Matthes-Toscani-2009, Bisi-2016}. Toscani \cite{Toscani-2006} used a transfer model closely related to the one introduced here in a model of opinion formation. Cordier, Pareschi and Toscani \cite{Cordier-Pareschi-Toscani-2005} investigate a model in which individuals exchange a fraction of their wealth with a random perturbation (and we recover the Robin Hood model when the random perturbation is set to zero). Pareschi and Toscani \cite{Pareschi-Toscani-2006} studied the tails of the asymptotic distribution for such models. Matthes and Toscani \cite{Matthes-Toscani-2008} extended the model to the case of averaged wealth preservation and proved its convergence to an equilibrium. They also provide a smoothness result and a description of the tails of the stationary distributions. In all these models, debts are not allowed, contrary to our situation.  We also refer to the review of Bisi \cite{Bisi-2016}. Recently, Cao and Motsch \cite{Cao-Motsch-2023a,  Cao-Motsch-2023b} introduced and studied a discrete stochastic model in which individuals exchange quantified units of wealth (e.g. one dollar). 
\medskip

This article studies models inspired by the life sciences with potential economic applications.  Here we use a similar idea to model the transfer of richness between individuals. Before we introduce the mathematical concepts rigorously, let us explain our model's idea in a few words. We study a population of individuals who possess a certain transferable quantity (for example, money) and exchange it according to a rule expressed for two individuals chosen randomly in the population. This rule is encoded in an abstract integration kernel but we also propose explicit examples that we call ``Robin Hood'' (RH) and ``Sheriff of Nottingham'' (SN) model, in reference to the well-known folk tale. In the Robin Hood model, the richer gives a fraction of its wealth to the poorest. On the contrary, in the Sheriff of Nottingham model, the richest takes a fraction of the poorest's wealth, possibly through  debt mechanism (we consider that people's wealth may become negative). We also consider distributed versions of those models, in which the result of the interaction is given by a probability distribution instead of just the exchange of a fixed fraction, and prove the convergence to equilibrium. All those models are conservative, meaning that the total wealth in the population remains constant in time.
\medskip

We introduce several novelties compared to the existing literature. Our first result is the existence of solutions as a continuous homogeneous semiflow on the space of measures, as well as the well-posedness of the problem in a strong sense,  for a rather general class of interaction kernels (Theorem \ref{TH3.3}); when in the existing literature we could consult, the existence of solutions is always understood in a weak sense.  We also provide a simple characterization of kernels that leave positively invariant the $L^1$ space of integrable functions, that is to say the measures that are absolutely continuous for the Lebesgue measure. This invariance may occur even when the kernel possesses some singularities (that is the case for the RH and SN models), as long as the singularities occur in a negligible way for the integration variables. We also prove the invariance of  subspace of measures with finite $p$-moment given the appropriate assumption on the interaction kernel. We apply a well-known kinetic method based on the Fourier distance \cite{Matthes-Toscani-2008, Carrillo-Toscani-2007} to prove the existence and global stability of asymptotic distributions in the case of the distributed Robin Hood model. Finally, we compute the transfer operators corresponding to the RH and SN models and show their well-posedness in $L^1$ as well as the set of measures, by applying our previous results.  
\medskip

The plan of the paper is the following. In Section \ref{Section2} we introduce some notations, the functional setting of the paper, and our main assumptions. In Section \ref{Section3}, we present the RH model, which corresponds to cooperative transfers, and the SN model, which corresponds to competitive transfers. These two examples illustrate the problem and will show how the rules at the individual level can be expressed using a proper kernel $K$.  In Section \ref{Section4}, we prove that under Assumption \ref{ASS1.1}, the operator $B$ maps $\mathcal{M}(I) \times \mathcal{M}(I)$ into $\mathcal{M}(I)$ and is a bounded bi-linear operator. Due to the boundedness of $B$, we will deduce that \eqref{1.1}-\eqref{1.2} generates a unique continuous semiflow on the space of positive measures $\mathcal{M}_+(I)$. In Section \ref{Section5}, we consider the restriction of the system \eqref{1.1}-\eqref{1.2}  to $L^1_+(I)$. 
In Section \ref{Section6}, we run individual-based stochastic simulations of the mixed RH and SN model. The paper is complemented with a Supplementary Materials file (denoted by the letter \ref{SectionA}) in which we recall some classical results about measure theory.

\section{Notations, functional setting, and assumptions}
\label{Section2}
Let $I \subset \R$ be a closed interval of $\R$. Depending on the situation, we may consider in the paper that $I$ is bounded or unbounded.
Let us recall that $\mathcal{B}(I)$  is the $\sigma$-algebra generated by all the open subsets of $I$ is called the  \textbf{Borel $\sigma$-algebra}. A subset $A$ of $I$ that belongs to $\mathcal{B}(I)$ is called a \textbf{Borel set}. Throughout this paper we equip $I$ with the Borel $\sigma$-algebra.

Let $\mathcal{M}(I)$ be the space of measures  on $I$.   We will denote $\mathcal{M}_+(I)$ the set of nonnegative measures on $I$ (the positive cone for $\mathcal{M}(I)$).
It is well known that $\mathcal{M}(I)$ endowed with the norm 
$$
\Vert u \Vert_{\mathcal{M}(I)}=\int_{I}\vert u \vert (dx), \forall u \in \mathcal{M}(I),
$$
is a Banach space. Here $|u|=u^++u^-$ is the total variation of the measure $u$, and $u^+$ and $u^-$ are the positive and negative parts of $u$. 
For $u\in \mathcal{M}(I)$ and $n\geq 1$ (not necessarily an integer), we say that $u$ \textit{has a finite $n$-th moment} if $\int_I |x|^n|u|(\dd x)<+\infty$ and denote by $M_n(u)$ the \textit{$n$-th moment} of $u$  
	\begin{equation}\label{eq:def-moment}
	    M_n(u) := \int_{I} x^n u(\dd x). 
	\end{equation}
	We will denote $\mathcal{M}_n(I)$ (resp. $\mathcal{P}_n(I)$) the set of signed measures (resp. probability measures) with finite $n$-th moment that are supported on $I$ ($I=\mathbb{R}$ is allowed). Equipped with the norm $\Vert u\Vert_{\mathcal{M}_n}:=\int_I (1+|x|^n)|u|(\dd x)$, $\mathcal{M}_n(I)$ is a Banach space that is continuously embedded in $\mathcal{M}(I)$, and $\mathcal{P}_n(I)$ is closed in $\mathcal{M}_n(I)$ for this topology.
	If $u\in \mathcal{P}_2(I)$, the \textit{variance} of $u$ is
	\begin{equation}\label{eq:def-variance}
	    V(u) := \int_{I}\big(x-M_1(u)\big)^2 u(\dd x). 
	\end{equation}
We refer the reader to the Supplementary Materials file for a precise definition and to the book of Bogachev \cite{bogachev2007measure} for elementary notions on measure theory.

%

\medskip 
An alternative to define the norm of a measure is the following  (see Proposition \ref{THA8} in the Supplementary Materials)
$$
\Vert u \Vert_{\mathcal{M}(I)}=\sup_{ \phi \in \BC\left(I\right): \Vert \phi \Vert _\infty \leq 1} \int_{I} \phi(x) u(dx) , \forall u \in \mathcal{M}(I). 
$$
A finite measure on an interval $I \subset \R$ (bounded or not) is, therefore, a bounded linear form on $\BC\left(I\right)$, the space of bounded and continuous functions from $I$ to $\R$.  Since  $\mathcal{M}(I)$ endowed with its norm is a Banach space, we deduce that $\mathcal{M}(I)$ is a closed subset of $\BC(I)^\star$. When the interval $I$ is not bounded, Example \ref{EXA9} in the Supplementary Materials shows that 
$$
\mathcal{M}(I) \neq BC(I)^\star.
$$

For any Borel subset $A \subset I$,  the quantity 
$$
\int_A u(t,dx) =u(t,A), 
$$
is the number of individuals having their transferable quantity $x$ in the domain $A \subset I$. Therefore, 
$$
\int_I u(t,dx) =u(t,I),
$$
is the total number of individuals at time $t$. 

\medskip 
The operator of transfer $T:  \mathcal{M}_+(I) \to \mathcal{M}_+(I)$ is defined by 
\begin{equation}\label{eq:defT}
	T\left(u\right)(dx):= \left\{ 
	\begin{array}{ll}
	    \dfrac{B(u,u)(dx)}{\int_{I}u}, &\text{ if } u \in \mathcal{M}_+(I)\setminus \left\{0\right\},  \vspace{0.2cm} \\
		0,  &\text{ if } u=0, 
	\end{array}
	\right. 
\end{equation}
where $B: \mathcal{M}(I) \times \mathcal{M}(I) \to \mathcal{M}(I)$ is a bounded bi-linear map defined by 
\begin{equation*}
	B(u, v)(dx):=\iint_{I^2} K(dx, x_1, x_2)u(\dd x_1) v(\dd x_2),
\end{equation*}
or equivalently, defined by 
\begin{equation*}
	B(u, v)(A):=\iint_{I^2} K(A, x_1, x_2)u(\dd x_1) v(\dd x_2),
\end{equation*}
for each Borel set $A\subset I$, and each $u, v\in \mathcal{M}(I)$; and $\int_{I}u$ is the total mass of $u$.  The assumptions on the kernel $K$ will be made precise later in Assumption \ref{ASS1.1}. Let us stress at this point that, even if we use an integral sign to denote the integral with respect to the measure $u$ and $v$, we do not assume that $u$ or $v$ are absolutely continuous with respect to the Lebesgue measure.

In the economic interpretation of the model, the kernel $K(\dd x, x_1, x_2)$ describes the repartition of wealth after two individuals of wealth $x_1$ and $x_2$ meet. For instance, in the case of the Robin Hood model presented below, the meeting of two individuals of wealth $(x_1, x_2)$ results in two individuals of wealth $\big((1-f) x_1+f x_2, (1-f)x_1 + f x_2)\big)$ for some $f\in(0,1)$ (the Sheriff of Nottingham model is similar but with $f>1$). Note that  meetings are symmetric (i.e. the meeting $(x_2, x_1)$ always occurs at the same time as $(x_1, x_2)$ and the result is the same), hence the $1/2$ factor in \eqref{eq:RH}. Distributed versions will also be considered (see Example \ref{ex:Distributed-RH}); in that case, the result of the interaction between the two individuals is no longer  deterministic and $K(\dd x, x_1, x_2)$ can be interpreted as a probability distribution. We present simulations of the microscopic model corresponding to this economic interpretation in section \ref{Section6}.

\medskip 
Let $T:  \mathcal{M}_+(I) \to \mathcal{M}_+(I)$ be a transfer operator. Let $\tau>0$ be the rate of transfers. We assume the time between two transfers follows an exponential law with a mean value of $1/\tau$. Two individuals will be involved once the transfer time has elapsed, so the transfer rate will have to be doubled (i.e, equal to  $2 \tau$).  The model of transfers is an ordinary differential equation on the space of positive measures 
\begin{equation} \label{1.1}
	\partial_t u(t, dx) = 2 \tau \, T\big(  u(t,.)\big)(dx) - 2 \tau \, u(t, dx),  \forall t \geq 0.
\end{equation}
\medskip 
The equation \eqref{1.1} should be complemented with the measure-valued  initial distribution 
\begin{equation} \label{1.2}
	u(0,dx)=\phi(dx) \in \mathcal{M}_+(I).  
\end{equation}
Let us stress at this point that $u(0, \dd x)$ need not be absolutely continuous with respect to the Lebesgue measure and may well possess singularities such as Dirac masses (and so does $u(t, \dd x)$). 

A solution of \eqref{1.1}, will be a continuous function $u:[0, +\infty) \to \mathcal{M}_+(I)$, satisfying the fixed point problem 
\begin{equation} \label{1.3}
	u(t,dx)=\phi(dx)+ \int_0^t  2\tau \, T\big(  u(\sigma,.)\big)(dx) - 2\tau \, u(\sigma, dx) d\sigma, 
\end{equation}
where the above integral is a Riemann integral in $(\mathcal{M}(I), \Vert . \Vert_{\mathcal{M}(I)})$.  

\medskip 
To prove the positivity of the solution \eqref{1.1}, one may prefer to use the fixed point problem  
\begin{equation} \label{1.4}
	u(t,dx)=e^{-2\tau t}\phi(dx)+ \int_0^t  e^{-2\tau \left(t-s\right)}2\tau \, T\big(  u(\sigma,.)\big)(dx) d\sigma, 
\end{equation}
which is equivalent to \eqref{1.3}. 

\medskip 
For each $t \geq 0$, 
$$
u(t,dx) \in \mathcal{M}_+(I), 
$$ 
is understood as a measure-valued population density at time $t$. 

\medskip 

\medskip 
Based on the examples of kernels presented in Section \ref{Section3}, we can make the following assumptions. 
\begin{assumption}\label{ASS1.1}
	We assume the kernel $K$ satisfies the following properties  
	\begin{enumerate}
		\item[{\rm (i)}] For each Borel set $A\in\mathcal{B}(I)$, the map $(x_1, x_2)\mapsto K(A, x_1, x_2)$ is Borel measurable.
		\item[{\rm (ii)}]  For each $(x_1, x_2)\in I \times I$, the map $A\in\mathcal{B}(I)\mapsto K(A, x_1, x_2)$ is a probability measure.
	\end{enumerate}
\end{assumption}
Assumption \ref{ASS1.1} is relatively weak and can be satisfied by many kernels. In particular, it is the case for kernels $K(dz, x_1, x_2)=K(z, x_1, x_2)\dd z$ that are given by a nonnegative integrable function of $\mathbb{R}^3$ (with respect to the Lebesgue measure).

We will often associate Assumption \ref{ASS1.1} with  the following.
\begin{assumption}\label{ASS1.2}
    We assume that $\int |x|K(\dd x, x_1, x_2)<+\infty$ for all $x_1, x_2\in I$, and that  there exists a constant $C$ such that 
    \begin{equation*}
	\int |x| K(\dd x, x_1, x_2)\leq C\big(|x_1|+|x_2|\big).
    \end{equation*}
Moreover, for each $(x_1, x_2)\in I \times I$, we assume that 
		$$
		\int_{I}x \, K(\dd x, x_1,x_2)=\dfrac{x_1+x_2}{2}.
		$$
\end{assumption}

\section{Examples of transfer kernels}	
\label{Section3}
Constructing a transfer kernel is a difficult problem. In this section, we propose two examples of transfer models. Some correspond to existing examples in the literature, while others seem new.

\subsection{Robin Hood model: (the richest give to the poorest)}

The poorest gains a fraction $f$ of the difference $\vert x_2-x_1 \vert $, and the richest looses a fraction $f$ of the difference $\vert x_2-x_1 \vert $
\begin{equation}\label{eq:RH}
	K_1(\dd x, x_1, x_2) := \frac{1}{2}\left(\delta_{x_2 - f( x_2-x_1)}(\dd x) + \delta_{x_1 - f(x_1-x_2)}(\dd x)\right), 
\end{equation}
where $f\in(0,1)$ is fixed. 

\medskip 
In Figure \ref{Fig1}, we explain why we need to consider a mean value of two Dirac masses in the kernel $K_1$. Consider a transfer between two individuals, and consider $x_1$ and $x_2$ (respectively $y_1$ and $y_2$) the values of the transferable quantities before transfer (respectively after transfer). When we define  $K_1$ we need to take a mean value, because  
we can not distinguish if $x_1$ and $x_2$ are ancestors of  $y_1$ or $y_2$. In other words, we can not distinguish between the two cases:  1) $x_1 \to y_1$ and $x_2 \to y_2$; and 2)  $x_1 \to y_2$ and $x_2 \to y_1$; (where $x \to y$ means $x$ becomes $y$).

\begin{figure}
	\centering
	\includegraphics[scale=0.4]{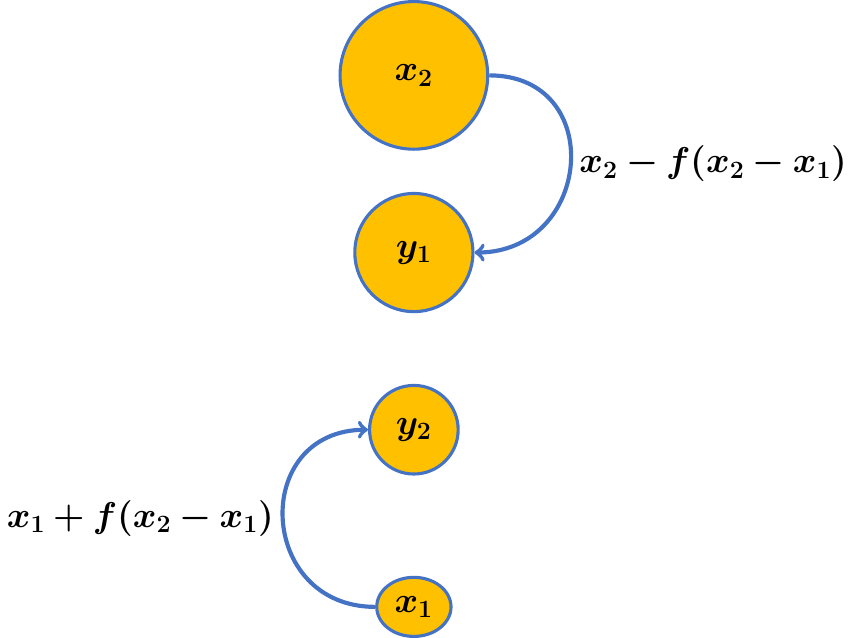}\includegraphics[scale=0.4]{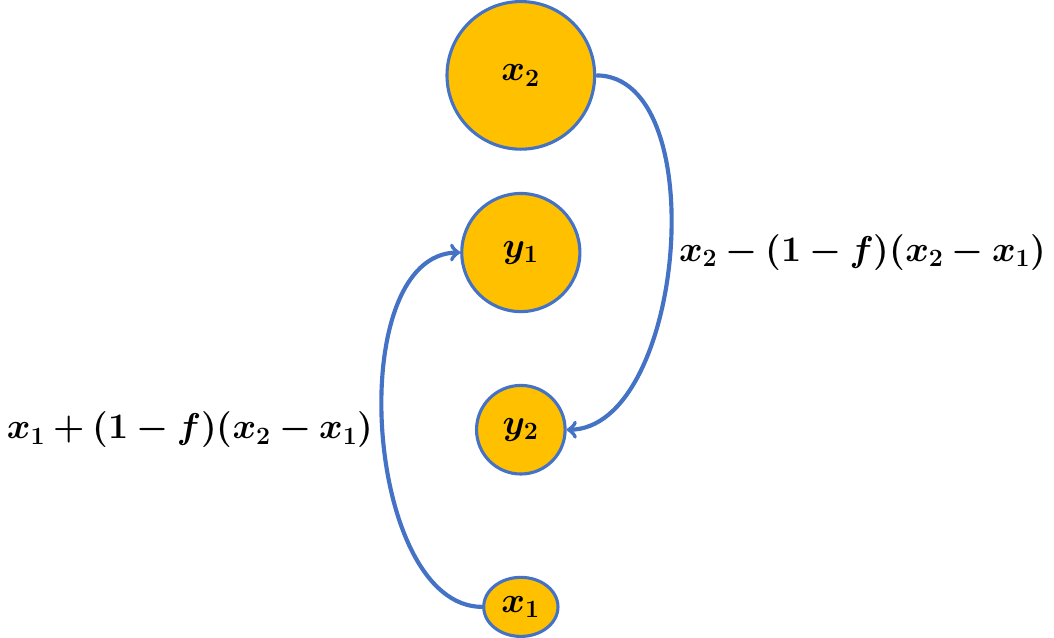}
	\caption{{\textit{In the figure, a transfer between two individuals, and consider $x_1$ and $x_2$ (respectively $y_1$ and $y_2$) the values of the transferable quantities before transfer (respectively after transfer).  We plot a transfer whenever $x_2>x_1$, and $f\in [0,1/2]$ (on the left hand side), and $(1-f)\in [1/2,1]$ (on the right hand side). We observe that the values $y_1$ and $y_2$  are the same on both sides. }}}\label{Fig1}
\end{figure}
In Figure \ref{Fig1},  we use the following equalities
\begin{equation*}
	y_1=	x_2-f (x_2-x_1)=x_1+(1-f) (x_2-x_1),
\end{equation*}
and 
\begin{equation*}
	y_2=x_2-(1-f) (x_2-x_1)=x_1+(1-f) (x_2-x_1).
\end{equation*}
The values $y_1$ and $y_2$  are the same after a given transfer if we choose  $f$ or $1-f$. In other words, we can not distinguish if $x_1$ and $x_2$ are ancestors of  $y_1$ or $y_2$. This explains the mean value of Dirac masses in the kernel $K_1$.   It follows, that  this kind of kernel will preserve the convex hull of the support of the initial distribution, so we can restrict to any closed bounded interval $I \subset \R$. This transfer kernel corresponds to the one proposed to build the recombination operator in Magal and Webb \cite{magal2000mutation}. An extended version with friction was proposed by Hinow, Le Foll, Magal, and Webb \cite{hinow2009analysis}. 

Here we can compute $B_1(u, v)$ explicitly when $I=\mathbb{R}$,  $u\in L^1(\mathbb{R})$ and $v\in L^1(\mathbb{R})$. Indeed let $\varphi\in C_c(I)$ be a compactly supported test function. Then
\begin{align*}
	\int_{\mathbb{R}}\varphi(x) B_1(u, v)(\dd x) &=\int_{x_1\in \mathbb{R}}\int_{x_2\in \mathbb{R}} \int_{x\in \mathbb{R}} \varphi(x) K_1(\dd x, x_1, x_2)u(x_1)\dd x_1 u(x_2)\dd x_2 \\ 
	&= \iint_{\mathbb{R}\times \mathbb{R}}  \frac{1}{2}\left(\varphi\big(x_2-f(x_2-x_1)\big)+\varphi\big(x_1-f(x_1-x_2)\big)\right) u(x_1)v(x_2)\dd x_1 \dd x_2 \\
	&=\frac{1}{2}\left(\iint_{\mathbb{R}\times \mathbb{R}} \varphi\big((1-f)x_2+fx_1\big) u(x_1)v(x_2)\dd x_1\dd x_2\right. \\
	&\quad\left.+\iint_{\mathbb{R}\times \mathbb{R}}\varphi\big((1-f)x_1+fx_2\big) u(x_1)v(x_2)\dd x_1\dd x_2\right)\\
	&=\frac{1}{2}\left(\iint_{\mathbb{R}\times \mathbb{R}} \varphi\big((1-f)x_2+fx_1\big) u(x_1)v(x_2)\dd x_1\dd x_2\right. \\
	&\quad\left.+\iint_{\mathbb{R}\times \mathbb{R}}\varphi\big((1-f)x_2+fx_1\big) u(x_2)v(x_1)\dd x_1\dd x_2\right).\\
\end{align*}
Next, by using the change of variable 
$$
\left\{ 
\begin{array}{l}
	x_1= x-(1-f) \sigma\\
	x_2=x+f\sigma 
\end{array}
\right.
\Leftrightarrow
\left\{ 
\begin{array}{l}
	\sigma=  x_2-x_1\\
	x=(1-f)x_2+f x_1
\end{array}
\right. 
$$ we obtain 
\begin{align*}
	\int_{\mathbb{R}}\varphi(x) B_1(u, v)(\dd x) &=\frac{1}{2}\left(\int_{\mathbb{R}} \varphi\big(x\big)\int_{\mathbb{R}}  u(x-(1-f) \sigma)v(x+f\sigma)\dd \sigma\dd x\right. \\
	&\quad\left.+\int_{\mathbb{R}} \varphi\big(x\big)\int_{\mathbb{R}} u(x+f\sigma) v(x-(1-f) \sigma)\dd \sigma \dd x\right),
\end{align*}
and we obtain 
\begin{equation*}
	B_1(u,v)(x)= \dfrac{1}{2}\left(\int_{\mathbb{R}} u(x-(1-f)\sigma)v\left(x+f\sigma \right)\dd \sigma + \int_{\mathbb{R}} v(x-(1-f)\sigma)u\left(x+f \sigma \right)\dd \sigma\right). 
\end{equation*}
We conclude that the transfer operator restricted to $L^1_+(\R)$ is defined by 
\begin{equation*}
	T_1(u)(x)=
	\left\{ 
	\begin{array}{ll}
		\dfrac{\int_{\mathbb{R}} u(x-(1-f)\sigma)u\left(x+f\sigma \right) \dd \sigma}{\int_{\mathbb{R}} u(x)\dd x} , \text{ if } u \in L^1_+(\R) \setminus \left\{0\right\}, \vspace{0.2cm} \\
		0, \text{ if } u =0. 
	\end{array}		
	\right.
\end{equation*}

\begin{remark}
	One may also consider the case where the fraction transferred varies in function  of the distance between the poorest and richest before transferred. This problem was considered by Hinow, Le Foll, Magal and Webb \cite{hinow2009analysis}, and in the case 
	\begin{equation*}
		K_1(\dd x, x_1, x_2) := \frac{1}{2}\left(\delta_{x_2 - f(\vert x_2-x_1\vert)( x_2-x_1)}(\dd x) + \delta_{x_1 -  f(\vert x_2-x_1\vert)(x_1-x_2)}(\dd x)\right), 
	\end{equation*}
	where $f:[0, +\infty) \to [0,1]$ is a continuous function. 
\end{remark}
\subsection{Sheriff of Nottingham model: (the poorest give to the richest)}
\label{sec:Sheriff-Nottingham}
The poorest looses a fraction $f$ of the difference $\vert x_2-x_1 \vert $, and the richest gains a fraction $f$ of the difference $\vert x_2-x_1 \vert $
\begin{equation}\label{eq:Sheriff-Nottingham}
	K_2(\dd x, x_1, x_2) := \frac{1}{2}\left(\delta_{x_2 +f( x_2-x_1)}(\dd x) + \delta_{x_1 + f(x_1-x_2)}(\dd x)\right), 
\end{equation}
where $f\in(0,1)$ is fixed.  

\medskip 
This kind of kernel will expand the support of the initial distribution to the whole real line, therefore we can not restrict to bounded intervals $I \subset \R$. \textbf{In this case, the only possible choice for the interval is $I=\mathbb{R}$.}

\medskip 
In Figure \ref{Fig2}, we explain why we need to consider a mean value of two Dirac masses in the kernel $K_1$. Indeed, we have 
$$
y_2=	x_2+f (x_2-x_1)=x_1+(1+f) (x_2-x_1),
$$
and 
$$
y_1=	x_2-(1+f) (x_2-x_1)=x_1-f (x_2-x_1).
$$
Therefore, the transferable quantities $y_1$ and $y_2$ after a given transfer, are the same with $f$ or $1+f$. 
\begin{figure}
	\centering
	\includegraphics[scale=0.4]{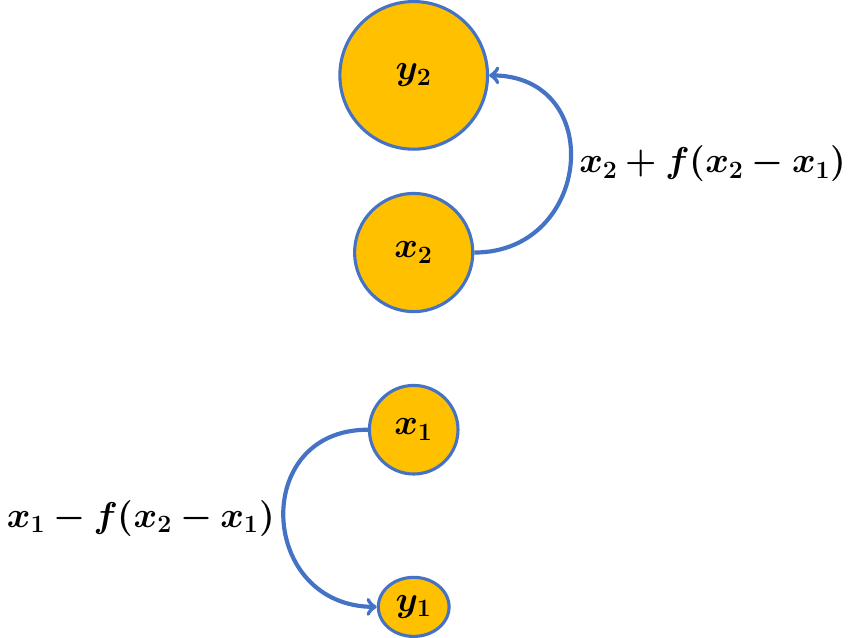}\includegraphics[scale=0.4]{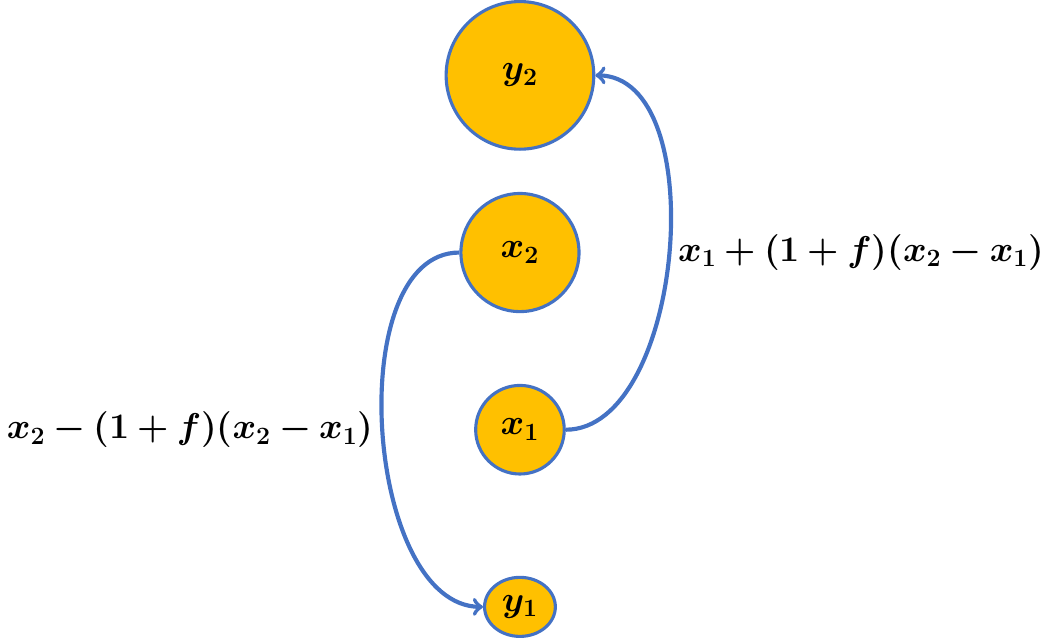}
	\caption{{\textit{In the figure, the two values before transfers are $x_1$ and $x_2$, and the two values after transfer are $y_1$ and $y_2$. We plot a transfer whenever $x_2>x_1$, and $f\in [0,1]$ (on the left hand side), and $f$ is replaced by $1+f$ (on the right hand side). We observe that the values $y_1$ and $y_2$  are the same on both sides. }}}\label{Fig2}
\end{figure}

Here again we can compute $B_2(u, v)$ explicitly when $I=\mathbb{R}$,  $u\in L^1(\mathbb{R})$ and $v\in L^1(\mathbb{R})$. Indeed let $\varphi\in C_c(I)$ be a compactly supported test function. Then
\begin{align*}
	\int_{\mathbb{R}}\varphi(x) B_2(u, v)(\dd x) 
	&=\int_{x_1\in \mathbb{R}}\int_{x_2\in \mathbb{R}} \int_{x\in \mathbb{R}} \varphi(x) K_2(\dd x, x_1, x_2)u(x_1)\dd x_1 u(x_2)\dd x_2 \\ 
	&= \iint_{\mathbb{R}\times \mathbb{R}}  \frac{1}{2}\left(\varphi\big(x_2+f(x_2-x_1)\big)+\varphi\big(x_1+f(x_1-x_2)\big)\right) u(x_1)v(x_2)\dd x_1 \dd x_2 \\
	&=\frac{1}{2}\left(\iint_{\mathbb{R}\times \mathbb{R}} \varphi\big((1+f)x_2-fx_1\big) u(x_1)v(x_2)\dd x_1\dd x_2\right. \\
	&\quad\left.+\iint_{\mathbb{R}\times \mathbb{R}}\varphi\big((1+f)x_1-fx_2\big) u(x_1)v(x_2)\dd x_1\dd x_2\right) \\
	&=\frac{1}{2}\left(\iint_{\mathbb{R}\times \mathbb{R}} \varphi\big((1+f)x_2-fx_1\big) u(x_1)v(x_2)\dd x_1\dd x_2\right. \\
	&\quad\left.+\iint_{\mathbb{R}\times \mathbb{R}}\varphi\big((1+f)x_2-fx_1\big) u(x_2)v(x_1)\dd x_1\dd x_2\right).
\end{align*}
Next, by using the change of variable 
$$
\left\{ 
\begin{array}{l}
	x_1= x-(1+f) \sigma\\
	x_2=x-f\sigma 
\end{array}
\right.
\Leftrightarrow
\left\{ 
\begin{array}{l}
	\sigma=  x_2-x_1\\
	x=(1+f)x_2-f x_1
\end{array}
\right. 
$$ we obtain 
\begin{align*}
	\int_{\mathbb{R}}\varphi(x) B_2(u, v)(\dd x) &=\frac{1}{2}\left(\int_{\mathbb{R}} \varphi\big(x\big)\int_{\mathbb{R}}  u(x-(1+f) \sigma)v(x-f\sigma)\dd \sigma\dd x\right. \\
	&\quad\left.+\int_{\mathbb{R}} \varphi\big(x\big)\int_{\mathbb{R}} u(x-f\sigma) v(x-(1+f) \sigma)\dd \sigma \dd x\right),
\end{align*}
and we obtain 
\begin{equation*}
	B_2(u,v)(x)= \dfrac{1}{2}\left(\int_{\mathbb{R}} u(x-(1+f)\sigma)v\left(x-f\sigma \right)\dd \sigma + \int_{\mathbb{R}} v(x-(1+f)\sigma)u\left(x-f \sigma \right)\dd \sigma\right). 
\end{equation*}
	We conclude that the transfer operator restricted to $L^1_+(\R)$ is defined by 
	\begin{equation*}
		T_2(u)(x)=
		\left\{ 
		\begin{array}{ll}
			\dfrac{\int_{\mathbb{R}} u(x-(1+f)\sigma)u\left(x-f\sigma \right) \dd \sigma}{\int_{\mathbb{R}} u(x)\dd x} , \text{ if } u \in L^1_+(\R) \setminus \left\{0\right\}, \vspace{0.2cm} \\
			0, \text{ if } u =0. 
		\end{array}		
		\right.
	\end{equation*}
	
	\section{Understanding \eqref{1.1} in the space of measures}   
	\label{Section4}
	Before anything, we need to define $B(u, v)(\dd x)$ whenever $u$ and $v$ are finite measures on $I$. 
	
	\begin{theorem} \label{TH3.1}
		Let Assumption \ref{ASS1.1} be satisfied. 
		Define
		\begin{equation}\label{3.1}
			B(u, v)(A):=\iint_{I^2} K(A, x_1, x_2)u(\dd x_1) v(\dd x_2),
		\end{equation}
		for each Borel set $A\subset I$, and each $u, v\in \mathcal{M}(I)$. 
		
		\medskip 
		\noindent 	Then $B$ maps $\mathcal{M}(I) \times \mathcal{M}(I)$ into $\mathcal{M}(I)$, and satisfies the following properties 
		\begin{itemize}
			\item[{\rm (i)}]
			\begin{equation*}
				\Vert B(u, v)\Vert_{\mathcal{M}(I)} \leq  \Vert u\Vert_{\mathcal{M}(I)} \Vert v\Vert_{\mathcal{M}(I)}, \forall u, v\in \mathcal{M}(I).
			\end{equation*}
			\item[{\rm (ii)}]
			\begin{equation*}
			    B(u, v)\in \mathcal{M}_+(I), \forall u, v\in \mathcal{M}_+(I),
			\end{equation*}
			\item[{\rm (iii)}]
			\begin{equation*} 
			    \int_{I}B(u, v)(\dd x) = \int_{I}u(\dd x_1) \int_{I}v(\dd x_2), \forall u,v\in \mathcal{M}_+(I).
			\end{equation*}
		\end{itemize}
		If moreover Assumption \ref{ASS1.2} is satisfied, then the following property also holds. 
		\begin{itemize}
			\item[{\rm (iv)}] 	
			\begin{equation*} 
			    \int_{I}x B(u, v)(\dd x) = \dfrac{\int_{I}x_1 u(\dd x_1) \int_{I}v(\dd x_2)+\int_{I}u(\dd x) \int_{I}x_2 v(\dd x_2) }{2}, \forall u,v\in \mathcal{M}_+(I).
			\end{equation*}
		\end{itemize}
	\end{theorem}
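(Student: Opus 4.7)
My strategy is to build everything from one master identity,
\[
\int_I \phi(x)\,B(u,v)(dx) \;=\; \iint_{I^2}\!\Big(\int_I \phi(x)\,K(dx, x_1, x_2)\Big)\,u(dx_1)\,v(dx_2),
\]
valid for suitable test functions $\phi$, from which (i)--(v) all fall out by an appropriate choice of $\phi$. The groundwork is to check that $B(u,v)$ is a genuine element of $\mathcal{M}(I)$, and then to extend the defining formula from indicators to more general $\phi$.

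\textbf{Step 1 (well-posedness of $B(u,v)$).} I would first work with $u,v \in \mathcal{M}_+(I)$. Assumption~\ref{ASS1.1}(i) ensures $(x_1, x_2) \mapsto K(A, x_1, x_2)$ is Borel measurable, and Assumption~\ref{ASS1.1}(ii) makes it bounded by $1$, so the iterated integral \eqref{3.1} is well-defined and finite, giving the positive set-function value $B(u,v)(A)$. Countable additivity for disjoint $(A_k)$ follows from $K(\sqcup_k A_k, x_1, x_2) = \sum_k K(A_k, x_1, x_2)$ combined with monotone convergence to exchange sum and double integral; this yields (ii), namely $B(u,v) \in \mathcal{M}_+(I)$. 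For general $u,v \in \mathcal{M}(I)$, I would extend by Jordan decomposition,
\[
B(u,v) \;:=\; B(u^+, v^+) - B(u^+, v^-) - B(u^-, v^+) + B(u^-, v^-),
\]
which is in $\mathcal{M}(I)$ and inherits bilinearity.

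\textbf{Step 2 (master identity).} Starting from the definition for $\phi = \mathbf{1}_A$, I would extend the identity to simple $\phi$ by linearity, to bounded nonnegative measurable $\phi$ by monotone convergence (using the probability-measure bound on $K$ and the finiteness of $u,v$), and then to bounded real $\phi$ by decomposition into positive and negative parts. For unbounded $\phi$ like $\phi(x)=x^n$ in (v), the identity holds in $[0,+\infty]$ by Tonelli applied to $u,v \in \mathcal{M}_+(I)$, with no extra integrability assumption needed since both sides may simultaneously be infinite.

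\textbf{Step 3 (deducing (iii)--(v) and (i)).} Taking $\phi \equiv 1$ and using $K(I, x_1, x_2) = 1$ yields (iii). Taking $\phi(x) = x^n$ and $u, v \in \mathcal{M}_+(I)$ in the master identity yields (v); specialising to $n=1$ and invoking Assumption~\ref{ASS1.1}(iii) yields (iv) by separating variables. For (i), positivity of $K$ and the Jordan decomposition give the pointwise domination $|B(u,v)(A)| \le B(|u|, |v|)(A)$ for every Borel $A$, whence by (iii),
\[
\|B(u,v)\|_{\mathcal{M}(I)} \;\le\; B(|u|, |v|)(I) \;=\; |u|(I)\,|v|(I) \;=\; \|u\|_{\mathcal{M}(I)}\,\|v\|_{\mathcal{M}(I)}.
\]

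\textbf{Main obstacle.} No step is deep, but the delicate point is Step~1: the measurability, the finiteness, and the $\sigma$-additivity of $B(u,v)$ all rest simultaneously on Assumption~\ref{ASS1.1}(i)--(ii) and a careful application of Tonelli/monotone convergence in the double integral. Once that foundation is in place, the master identity of Step~2 mechanises the rest, and the only subtlety for (i) is remembering that the bound $|B(u,v)| \le B(|u|,|v|)$ is the correct replacement for positivity when passing to signed measures.
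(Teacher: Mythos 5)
Your proposal is correct, but it takes a genuinely different route from the paper. The paper proves countable additivity of $B(u,v)$ directly for \emph{signed} $u,v$: it takes the Hahn decomposition of the product measure $u\otimes v$ (the set $P$ carrying its positive part), and interchanges $\sum_n$ and $\iint$ by applying Tonelli--Fubini with the counting measure on $\mathbb{N}$ to each of the two pieces $\mathbbm{1}_P\,u\otimes v$ and $-\mathbbm{1}_{P^c}\,u\otimes v$; the bound (i) then comes from $\vert w\vert(I)\le \iint K(I,x_1,x_2)\,\vert u\otimes v\vert(\dd x_1 \dd x_2)$, and (iii)--(v) are obtained by direct Fubini computations using $K(I,x_1,x_2)=1$ and the first-moment condition. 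You instead establish everything first on $\mathcal{M}_+(I)$, where $\sigma$-additivity is a one-line monotone-convergence argument, and then extend to signed measures by the Jordan decompositions of $u$ and $v$ separately, deriving (iii)--(v) from a single master identity (indicators $\to$ simple $\to$ bounded measurable $\to$ monotone limits) and (i) from the set-wise domination $\vert B(u,v)(A)\vert\le B(\vert u\vert,\vert v\vert)(A)$. Your route is more elementary in that it avoids both the Hahn decomposition of $u\otimes v$ and the counting-measure Fubini device, at the cost of two small consistency checks you should state explicitly: that the four-term Jordan extension coincides with the iterated-integral formula \eqref{3.1} (immediate from linearity of integration of the bounded kernel against signed measures, applied first in $x_1$, then in $x_2$), and that set-wise domination by a positive measure bounds the total variation, so that $\Vert B(u,v)\Vert_{\mathcal{M}(I)}\le B(\vert u\vert,\vert v\vert)(I)$. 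With those remarks added, your argument delivers the theorem; the implicit integrability caveat in (iv)--(v) for unbounded $I$ (moments possibly infinite, or $x^n$ signed when $I$ contains negative values) is present in the paper's proof as well, so it is not a gap relative to it.
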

	\begin{proof}
		Let $u\in \mathcal{M}(I)$, $v\in \mathcal{M}(I)$ and define
		$$
		w(\dx) :=B(u, v)(\dx) 
		$$ 
		by \eqref{3.1}.
		Let $(A_n)_{n\in\mathbb{N}}$ a collection of pairwise disjoint Borel-measurable sets in $I$.
		We want to prove that 
		\begin{align}\label{3.2}
			w\left(\bigcup_{n\in\mathbb{N}} A_n\right)&=  \sum_{n\in\mathbb{N}}w(A_n).
		\end{align}
		We have 
		\begin{equation*}
			\begin{array}{ll}
				w\left(\bigcup_{n\in\mathbb{N}} A_n\right) = \displaystyle \iint_{I^2} K\left(\bigcup_{n\in\mathbb{N}} A_n, x_1, x_2\right)u(\dd x_1) v(\dd x_2)
				 = \displaystyle  \iint_{I^2} \sum_{n\in\mathbb{N}}K(A_n, x_1, x_2)u(\dd x_1) v(\dd x_2). 
			\end{array}
		\end{equation*}
		In order to change the order of summation between the integral and  the sum, we will use Fubini's Theorem \cite[Vol. I Theorem 3.4.4 p.185]{bogachev2007measure} and Tonnelli's Theorem \cite[Vol. I Theorem 3.4.5 p.185]{bogachev2007measure}.

		We consider $P\subset \mathcal{B}(I^2)$ the support of the positive part of $u\otimes v$ (as given by \cite[Theorem 3.1.1 p. 175]{bogachev2007measure}). That is 
		$$
		\mathbbm{1}_P u\otimes v\in \mathcal{M}_+(I^2), \text{ and }-\mathbbm{1}_{P^c} u\otimes v\in \mathcal{M}_+(I^2), 
		$$
		where    $\mathbbm{1}_P$ (respectively  $\mathbbm{1}_{P^c}$) is the indicator functions of $P$, that is $\mathbbm{1}_P (x)=1$ if $x \in  P$ else $\mathbbm{1}_P (x)=0$   (respectively the indicator function of  $P^c=I \setminus P$ the complement set of $P$).  
		
		\medskip 
		We consider the maps defined for all $n\in\mathbb{N}$,  and all $x_1, x_2\in I,$
		\begin{equation}
			f_n(x_1, x_2):=\mathbbm{1}_PK(A_n, x_1, x_2),
		\end{equation}
		and
		\begin{equation*}
			\, f(n, x_1, x_2):=f_n(x_1, x_2).
		\end{equation*}
		Then by Assumption \ref{ASS1.1}-(i), for each integer $n\in\mathbb{N}$ the map $f_n$ is Borel measurable (i.e., measurable with respect to the Borel $\sigma$-algebra), and for each Borel set $B\subset\mathbb{R}$, we have
		\begin{equation*}
			f^{-1}(B) = \bigcup_{n\in\mathbb{N}} \{n\}\times f_n^{-1}(B). 
		\end{equation*}
		Consequently, $f$ is measurable for the $\sigma$-algebra $\mathcal{P}(\mathbb{N})\otimes \mathcal{B}(I^2)$, the smallest $\sigma$-algebra in $\mathcal{P}(\mathbb{N}\times I^2)$ that contains all rectangles $\mathcal{N}\times B$ where $\mathcal{N}\in \mathcal{P}(\mathbb{N})$ and $B\in\mathcal{B}(I^2)$.
		
		\medskip 
		Let $c$ be the counting measure on $\mathbb{N}$,  $c:=\sum_{n\in\mathbb{N}}\delta_n$. By Tonnelli's Theorem \cite[Vol. I Theorem 3.4.5 p.185]{bogachev2007measure}, since $f$ is nonnegative, and $c$ and $(u\otimes v)_+$ are nonnegative $\sigma$-finite measures,  and
		\begin{align*}
			\int_{I^2}\int_{\mathbb{N}} f(n, x_1, x_2)c(\dd n) (u\otimes v)_+(\dd x_1 \dd x_2) 
			&= \int_{I^2} \sum_{n\in\mathbb{N}}\mathbbm{1}_P(x_1, x_2)K(A_n, x_1, x_2)(u \otimes v)_+(\dd x_1 \dd x_2)  \\ 
			&=\iint_{I^2} \mathbbm{1}_P(x_1, x_2)K\left(\bigcup_{n\in\mathbb{N}}A_n, x_1, x_2\right)u(\dd x_1) v(\dd x_2) \\ 
			&\leq  \iint_{I^2} \mathbbm{1}_P(x_1, x_2)K\left(I^2, x_1, x_2\right)u(\dd x_1) v(\dd x_2) \\
			& = (u\otimes v)(P)<+\infty.
		\end{align*}
		We conclude that $f\in L^1(c\otimes (u\otimes v)_+) $ and therefore by Fubini's Theorem \cite[Vol. I Theorem 3.4.4 p.185]{bogachev2007measure} we have 
		\begin{align*}
				\int_{I^2}\int_{\mathbb{N}} f(n, x_1, x_2)c(\dd n) (u\otimes v)_+(\dd x_1 \dd x_2)  
			= \int_{\mathbb{N}}\int_{I^2} f(n, x_1, x_2)c(\dd n) (u\otimes v)_+(\dd x_1 \dd x_2). 
		\end{align*}
		This means that 
		\begin{align*}
			\iint_{I^2} \sum_{n\in\mathbb{N}} \mathbbm{1}_P K(A_n, x_1, x_2)u(\dd x_1) v(\dd x_2) 
			= \sum_{n\in\mathbb{N}}\iint_{I^2} \mathbbm{1}_P K(A_n, x_1, x_2)u(\dd x_1) v(\dd x_2). 
		\end{align*}
		By similar arguments we can show that 
		\begin{align*}
			\iint_{I^2} \sum_{n\in\mathbb{N}} (-\mathbbm{1}_{P^c}) K(A_n, x_1, x_2)u(\dd x_1) v(\dd x_2) 
			= \sum_{n\in\mathbb{N}}\iint_{I^2} (-\mathbbm{1}_{P^c}) K(A_n, x_1, x_2)u(\dd x_1) v(\dd x_2). 
		\end{align*}
		Thus we obtain
		\begin{align*}
		    \iint_{I^2} \sum_{n\in\mathbb{N}} &K(A_n, x_1, x_2)u(\dd x_1) v(\dd x_2)
			=\iint_{I^2} \sum_{n\in\mathbb{N}}(\mathbbm{1}_P+\mathbbm{1}_{P^c}) K(A_n, x_1, x_2)u(\dd x_1) v(\dd x_2)  \\ 
			&=\iint_{I^2} \sum_{n\in\mathbb{N}}\mathbbm{1}_P K(A_n, x_1, x_2)u(\dd x_1) v(\dd x_2)  
			+ \iint_{I^2} \sum_{n\in\mathbb{N}}\mathbbm{1}_{P^c} K(A_n, x_1, x_2)u(\dd x_1) v(\dd x_2)\\
			&=\sum_{n\in\mathbb{N}}\iint_{I^2} \mathbbm{1}_P K(A_n, x_1, x_2)u(\dd x_1) v(\dd x_2)  
			 + \sum_{n\in\mathbb{N}}\iint_{I^2} \mathbbm{1}_{P^c} K(A_n, x_1, x_2)u(\dd x_1) v(\dd x_2)\\
			&=\sum_{n\in\mathbb{N}} \iint_{I^2} K(A_n, x_1, x_2)u(\dd x_1) v(\dd x_2)
			= \sum_{n\in\mathbb{N}}w(A_n). 
		\end{align*}
		We have proved \eqref{3.2} for any family of pairwise disjoint Borel sets $(A_n)$, hence $w$ is a measure.
		Moreover $w$ is finite because 
		\begin{align*}
			\int_{I} |w|(\dd x)=\vert w \vert (I) &\leq  \iint_{I^2}  K(I, x_1, x_2) |u\otimes v|(\dd x_1\dd x_2) 
			= \iint_{I^2} |u|(\dd x_1) |v|(\dd x_2) 
			= \Vert u\Vert_{\mathcal{M}(I)}\Vert v\Vert_{\mathcal{M}(I)}.
		\end{align*}
		We have proved that 
		\begin{equation*}
			\Vert B(u, v)\Vert_{\mathcal{M}(I)} \leq  \Vert u\Vert_{\mathcal{M}(I)} \Vert v\Vert_{\mathcal{M}(I)}.
		\end{equation*}
		Hence $B(u, v)$ is a continuous bilinear map on $\mathcal{M}(X)$.
		
		\medskip 
		To prove (iii), we use Fubini's theorem in the formula  \eqref{3.1} of $B(u,v)$ as follows:
		\begin{equation*}
			\begin{array}{ll}
				\int_{I} B(u,v)  (\dd x) = B(u,v) (I)  &= \iint_{I^2} K(I, x_1, x_2)u(\dd x_1)v(\dd x_2) \\
				&= \iint_{I^2}u(\dd x_1)v(\dd x_2 )\\
				& = \int_{I}u(\dd x)\int_{I}v(\dd x),
			\end{array}
		\end{equation*}
		because $K(I, x_1, x_2) = 1$ by assumption.

		To prove (iv), we use Fubini's theorem applied to the formula  \eqref{3.1} of $B(u,v)$:
		\begin{equation*}
			\begin{array}{ll}
				\int_{I} x B(u,v)  (\dd x)  &= \iint_{I^2} \int_I x K(dx, x_1, x_2)u(\dd x_1)v(\dd x_2) \\
				&= \iint_{I^2} \dfrac{x_1+x_2}{2}u(\dd x_1)v(\dd x_2 ) \\
				&= \dfrac{\int_{I}x_1 u(\dd x_1) \int_{I}v(\dd x_2)+\int_{I}u(\dd x_1) \int_{I}x_2 v(\dd x_2) }{2},
			\end{array}
		\end{equation*}
		because $\int_I x K(dx, x_1, x_2)= \dfrac{x_1+x_2}{2}$ by Assumption \ref{ASS1.2}.
	\end{proof}

	As a consequence of Theorem \ref{TH3.1},  the map $B$ is  a bounded and bi-linear operator from $\mathcal{M}(I)\times \mathcal{M}(I)$ to $\mathcal{M}(I)$. Moreover $B$ maps $\mathcal{M}_+(I)\times \mathcal{M}_+(I)$ into $\mathcal{M}_+(I)$. To investigate the Lipschitz property of $T:\mathcal{M}_+(I) \to \mathcal{M}_+(I)$,  it is sufficient to observe that (here for short we replace $ \Vert .\Vert_{\mathcal{M}(I)}$ by $\Vert .\Vert$) 
	\begin{equation*}
		\begin{array}{ll}
			\Vert T(u)-T(v) \Vert\\
			= \bigg \Vert \Vert u \Vert^{-1}B(u,u-v)+  \left( \Vert u \Vert^{-1}-\Vert v \Vert^{-1}\right) B(u,v)+  \Vert v\Vert^{-1} B(u-v,v) \bigg \Vert \vspace{0.2cm}\\
			\leq  \bigg \Vert \Vert u \Vert^{-1} B(u,\vert u-v \vert )+  \left( \Vert u \Vert^{-1}-\Vert v \Vert^{-1} \right) B(u,v)+  \Vert v\Vert^{-1}  B(\vert u-v \vert ,v) \bigg \Vert \vspace{0.2cm}\\
			\leq 2 \, \Vert u-v \Vert + \vert \Vert u \Vert^{-1} -\Vert v \Vert^{-1}  \vert \,   \Vert u \Vert  \, \Vert v \Vert \vspace{0.2cm}\\
			\leq 2 \, \Vert u-v \Vert + \vert \Vert u \Vert-\Vert v \Vert\vert  \vspace{0.2cm}\\
			\leq 3 \, \Vert u-v \Vert,
		\end{array}
	\end{equation*}
	therefore we obtain the following proposition. 
	
	\begin{proposition} \label{PR3.2} 	Let Assumption \ref{ASS1.1} be satisfied.  The operator $T$ map $\mathcal{M}_+(I)$ into itself, and  $T$ satisfies the following properties  
		\begin{itemize}
			\item[{\rm (i)}]  $T:\mathcal{M}_+(I) \to \mathcal{M}_+(I) $ is Lipchitz continuous.
			\item[{\rm (ii)}]   $T$ is positively homogeneous. That is, 
			\begin{equation*}
				T(\lambda u)=\lambda T(u), \forall \lambda \geq 0, \forall u \in \mathcal{M}_+(I).
			\end{equation*} 
			\item[{\rm (iii)}] $T$ preserves the total mass of individuals. That is,  
			$$
			\int_{I} T(u)(dx)=	\int_{I} u(dx), \forall  u \in \mathcal{M}_+(I). 
			$$
		\end{itemize}
		If moreover Assumption \ref{ASS1.2} holds, then
		\begin{itemize}
			\item[{\rm (iv)}] $T$ preserves the total mass of transferable quantity. That is,  
			$$
			\int_{I}x T(u)(dx)=	\int_{I}x u(dx), \forall  u \in \mathcal{M}_+(I). 
			$$
		\end{itemize}
	\end{proposition}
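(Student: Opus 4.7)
The plan is to derive the four properties from Theorem \ref{TH3.1} specialized to $v=u$. Since $B$ maps $M_+(I)\times M_+(I)$ into $M_+(I)$ by Theorem \ref{TH3.1}(ii), and since dividing a positive finite measure by a positive scalar preserves positivity, the formula $T(u)=B(u,u)/\Vert u\Vert_{\mathcal{M}(I)}$ delivers an element of $M_+(I)$ whenever $u\neq 0$; together with $T(0)=0$ this shows $T$ maps $M_+(I)$ into itself.

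For the two conservation identities, I would simply specialize Theorem \ref{TH3.1}(iii)--(iv) to $v=u$. Part (iii) yields $\int_I B(u,u)(dx)=\Vert u\Vert_{\mathcal{M}(I)}^2$, and dividing by $\Vert u\Vert_{\mathcal{M}(I)}$ proves (iii) of the proposition. Part (iv) yields $\int_I x\,B(u,u)(dx)=\Vert u\Vert_{\mathcal{M}(I)}\int_I x\,u(dx)$ (the two symmetric terms collapse into one), so division by $\Vert u\Vert_{\mathcal{M}(I)}$ proves (iv). Positive homogeneity is immediate from the bilinearity of $B$: for $\lambda>0$ one has $B(\lambda u,\lambda u)=\lambda^2 B(u,u)$ and $\Vert\lambda u\Vert_{\mathcal{M}(I)}=\lambda\Vert u\Vert_{\mathcal{M}(I)}$, so $T(\lambda u)=\lambda T(u)$; the case $\lambda=0$ is covered directly by the definition of $T$.

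For Lipschitz continuity (i), the calculation preceding the statement already contains the core algebraic step: for $u,v\neq 0$ one writes
\begin{equation*}
T(u)-T(v)=\frac{B(u,u-v)}{\Vert u\Vert_{\mathcal{M}(I)}}+B(u,v)\left(\frac{1}{\Vert u\Vert_{\mathcal{M}(I)}}-\frac{1}{\Vert v\Vert_{\mathcal{M}(I)}}\right)+\frac{B(u-v,v)}{\Vert v\Vert_{\mathcal{M}(I)}},
\end{equation*}
takes norms, and applies the submultiplicative bound $\Vert B(\cdot,\cdot)\Vert_{\mathcal{M}(I)}\leq \Vert\cdot\Vert_{\mathcal{M}(I)}\Vert\cdot\Vert_{\mathcal{M}(I)}$ from Theorem \ref{TH3.1}(i) to each of the three pieces. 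The first and third contribute $\Vert u-v\Vert_{\mathcal{M}(I)}$ each, while the middle term is bounded by $|\Vert u\Vert_{\mathcal{M}(I)}-\Vert v\Vert_{\mathcal{M}(I)}|\leq \Vert u-v\Vert_{\mathcal{M}(I)}$ via the reverse triangle inequality, yielding the constant $3$.

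The main obstacle---really the only one requiring care---is that the above identity presumes $\Vert u\Vert_{\mathcal{M}(I)}>0$ and $\Vert v\Vert_{\mathcal{M}(I)}>0$, whereas $T$ is defined piecewise at the origin. I would treat the boundary separately: if $u=0$ and $v\neq 0$, then $\Vert T(u)-T(v)\Vert_{\mathcal{M}(I)}=\Vert T(v)\Vert_{\mathcal{M}(I)}\leq \Vert v\Vert_{\mathcal{M}(I)}=\Vert u-v\Vert_{\mathcal{M}(I)}$ by Theorem \ref{TH3.1}(i), and symmetrically if $v=0$; the case $u=v=0$ is trivial. Combining these gives the uniform bound $\Vert T(u)-T(v)\Vert_{\mathcal{M}(I)}\leq 3\Vert u-v\Vert_{\mathcal{M}(I)}$ on all of $M_+(I)$, completing the proof of (i).
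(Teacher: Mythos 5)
Your proof is correct and takes essentially the same route as the paper: the Lipschitz estimate (i) rests on the identical three-term decomposition of $T(u)-T(v)$ with the bound of Theorem \ref{TH3.1}(i) and the reverse triangle inequality giving the constant $3$, while (ii)--(iv) are read off from Theorem \ref{TH3.1}(ii)--(iv) with $v=u$, exactly as the paper intends. Your explicit treatment of the boundary cases $u=0$ or $v=0$ is a small, correct addition that the paper leaves implicit.
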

	Therefore we obtain the following theorem. 
	\begin{theorem} \label{TH3.3}
		Let Assumption \ref{ASS1.1} be satisfied.  Then the Cauchy problem
		\begin{equation} \label{3.3}
			\partial_t u(t, dx) =2 \tau \, T\big(  u(t)\big)(dx) -  2 \tau \, u(t, dx), 
		\end{equation}
		with 
		\begin{equation} \label{3.4}
			u(0,dx)=\phi(dx) \in \mathcal{M}_+ (I),  
		\end{equation}
		generates a unique continuous homogeneous semiflow $t \to S(t)\phi $ on $\mathcal{M}_+ (I)$. That is 
		\begin{itemize}
			\item[{\rm (i)}]\textbf{(Semiflow property)}  
			$$
			S(0)\phi =\phi  \text{ and }	S(t)S(s)\phi =S(t+s)\phi ,\forall t,s \geq 0, \forall \phi \in \mathcal{M}_+(I).
			$$ 
			\item[{\rm (ii)}]\textbf{(Continuity)}  The map $(t,\phi ) \to S(t) \phi $ is a continuous map from $ [0, +\infty) \times \mathcal{M}_+(I)$ to $ \mathcal{M}_+(I)$. 
			\item[{\rm (iii)}]\textbf{(Homogeneity)}  
			$$
			S(t) \lambda\phi =  \lambda S(t)\phi ,\forall t \geq 0,  \forall \lambda\geq 0, \forall \phi \in \mathcal{M}_+(I).
			$$ 
			\item[{\rm (iv)}]\textbf{(Preservation of the total mass of individuals)}  The total mass of individuals is preserved 
			$$
			\int_I	S(t)(\phi)  (dx)=  \int_I \phi(dx) ,\forall t \geq 0,  \forall \lambda\geq 0, \forall \phi \in \mathcal{M}_+(I).
			$$ 
			\item[{\rm (v)}]\textbf{(From transfer rate $1/2$ to any transfer rate $\tau>0$)}   If we define $S^\star(t)$ the semi-flow generated by \eqref{3.3}-\eqref{3.4} whenever $\tau=1/2$, then 
			$$
			S(t)=S^\star\left(2 \tau t\right), \forall t \geq 0.
			$$
		\end{itemize}
		If moreover Assumption \ref{ASS1.2} holds, then
		\begin{itemize}
			\item[{\rm (vi)}]\textbf{(Preservation of the total mass of transferable quantity)}  The total mass of transferable quantity is preserved 
			$$
			\int_I	xS(t)(\phi)  (dx)=  \int_I x \phi(dx) ,\forall t \geq 0,  \forall \lambda\geq 0, \forall \phi \in \mathcal{M}_+(I).
			$$ 
		\end{itemize}
	\end{theorem}
	\begin{remark}
		Let $\mathcal{F}:\mathcal{M}(I) \to \R$ be a positive  bounded linear form on $\mathcal{M}(I)$. We can consider for example 
		$$
		\mathcal{F}(u)=\int_{I} f(x)u(dx),
		$$
		where $f :I \to \R$ a bounded and positive continuous map on $I$. 
		
		\medskip 
		Then $(t,\phi) \mapsto U(t)\phi$ define on $[0, +\infty) \times \mathcal{M}_+ (I)$ by 
		$$
		U(t)u= \dfrac{S(t)\phi}{1+\int_{0}^{t} \mathcal{F}(S(\sigma)\phi) d \sigma}, 
		$$
		is the unique solution of  the Cauchy problem 
		$$
		u'(t)=2 \tau \, T\big( \vert  u(t) \vert \big)(dx) -  2 \tau \, u(t, dx)-\mathcal{F}(u(t))u(t,dx),
		$$ 
		with 
		\begin{equation*} 
			u(0,dx)=\phi(dx) \in \mathcal{M}_+(I).  
		\end{equation*}
		More detailed arguments can be found in Magal and Webb \cite{magal2000mutation}, and Magal \cite{magal2002global}. 
	\end{remark}

	\begin{remark}
		The rate of transfers $\tau(x)$  may vary in function of $x$ the transferable quantity. In that case, we obtain the following model  
		\begin{equation*}
			\partial_t u(t, x) = T\big( 2\tau(.) \, u(t,.)\big)(x) -  2\tau(x) u(t, x), \text{ for } x \in \R,
		\end{equation*}
		with 
		\begin{equation*} 
			u(0,dx)=\phi(dx) \in \mathcal{M}(I).  
		\end{equation*}
	\end{remark}
		Theorem \ref{TH3.3} (i) and (ii) is a direct consequence of the Cauchy-Lipschitz Theorem in Banach spaces. The properties (iii)-(v) are readily derived from the properties of $T$ and the change of variables formula for Riemann integrals. We still need to prove the preservation of first moment in (vi), which will be a consequence of the Proposition \ref{prop:further-regularity} below.

	\begin{proposition}[Improved  regularity of the semiflow]\label{prop:further-regularity}
		Let $I\subset \mathbb{R}$ be an interval and $p\geq 1$ and $u_0\in \mathcal{M}_p(I)$be given. Assume that there exists a constant $C>0$ such that  
		\begin{equation*}
			\int |x|^pK(\dd x, x_1, x_2) \leq C\big(1+|x_1|^p+|x_2|^p\big), \text{ for all } x_1, x_2\in I. 
		\end{equation*}
			Then  $\int |x|^p S(t)u_0(\dd x)<+\infty$ for any $t>0$. More precisely, the orbit $t\mapsto S(t)u_0$ is continuous on $\mathcal{M}_p(I)$ for the norm $\Vert u\Vert_{\mathcal{M}_p}= \int (1+|x|^p) |u|(\dd x)$.
	\end{proposition}
	We first prove the following Lemma. 
	\begin{lemma}\label{lem:regularity-subspace}
		Let $I\subset \mathbb{R}$ be an interval and $(X, \Vert \cdot\Vert_X)$ be a Banach space that is continuously embedded in $\mathcal{M}(I)$. Assume that  there is a constant $C>0$ such that  $\Vert B(u, u)\Vert_X\leq C\Vert u\Vert \Vert u\Vert_X$. Then $X$ is positively invariant for $S(t)$ and $t\mapsto S(t)u_0$ is continuous in $X$ for any $u_0\in X$.
	\end{lemma}
	\begin{proof}
		Let us assume without loss of generality that $\Vert u\Vert\leq \Vert u\Vert_X$ for all $u\in X$. Clearly, by the polarization identity $B(u, v)=\frac{1}{4}\big(B(u+v,u+v)-B(u-v, u-v)\big)$, it follows from our assumptions that $B$ is continuous on $X$.

		We first show that $T$ is locally Lipschitz continuous in $X$. We have:
		\begin{align*}
			\Vert T(u)-T(v)\Vert_X&=\left\Vert\dfrac{B(u, u)}{\Vert u\Vert} - \dfrac{B(v, v)}{\Vert v\Vert}\right\Vert_X \\ 
			&=\left\Vert {\Vert u\Vert^{-1}}\big({B(u, u)}-B(v,v)\big)+  B(v, v) \big(\Vert u\Vert^{-1}-\Vert v\Vert^{-1}\big) \right\Vert_X \\ 
			&\leq \Vert u\Vert^{-1}\big\Vert B(u+v, u-v)\big\Vert_X +\Vert u\Vert^{-1}\Vert v\Vert^{-1} \Vert B(u, v)\Vert_X \big|\Vert u\Vert-\Vert v\Vert\big| \\ 
			&\leq C\Vert B\Vert_X\big(\Vert u\Vert^{-1} \Vert u+v\Vert_X + \Vert u\Vert ^{-1}\Vert v\Vert ^{-1} \Vert u\Vert_X \Vert v\Vert_X\big) \Vert u-v\Vert_X ,
		\end{align*}
		so $T$ is locally Lipschitz continuous on any open set of the form $\{ u\,:\, \epsilon< \Vert u\Vert\}$ with  $\epsilon >0$. By the existence and uniqueness of the solution (recall that the norm is preserved by the semiflow $S(t)$), for any $u_0\in X$ with $\Vert u_0\Vert>0$ there exists a maximal time $T(u_0)$ such that $S(t)u_0\in X$ for all $t\in\big[0, T(u_0)\big) $ and we have the alternative $T(u_0)=+\infty$ or 
		\begin{equation*}
			\liminf_{t\to T(u_0)^-}\Vert S(t)u_0\Vert_X=+\infty.
		\end{equation*}
		Let $u_0\in \mathcal{M}(\mathbb{R})$ and suppose by contradiction that $T(u_0)<+\infty$. Let $u(t)=S(t) u_0$. Integrating by parts the equation \eqref{1.1} we get
		\begin{equation*}
			u(t) = e^{-2\tau t}u_0 + \int_0^te^{-2\tau(t-s)}T\big(u(s)\big)\dd s
		\end{equation*}
		so 
		\begin{align*}
			e^{2\tau t}\Vert u(t)\Vert_X &\leq \Vert u_0\Vert_X + \int_0^t e^{2\tau s} \Vert T\big(u(s)\big)\Vert_X\dd s \leq \Vert u_0\Vert_X + \int_0^t e^{2\tau s} C\Vert u(s)\Vert_X\dd s
		\end{align*}
		and by Gronwall's inequality we obtain 
		\begin{equation*}
			e^{2\tau t}\Vert u(t)\Vert_X \leq \Vert u_0\Vert  e^{Ct}, \text{ and finally }\Vert u(t)\Vert_X \leq e^{(C-2\tau)t} . 
		\end{equation*}
		Thus 
		\begin{equation*}
			\liminf_{t\to T(u_0)} \Vert u(t)\Vert_X \leq \Vert u_0\Vert e^{(C-2\tau)T(u_0)}<+\infty, 
		\end{equation*}
		which is a contradiction.
	\end{proof}
	We are now in the position to prove Proposition \ref{prop:further-regularity}

	\begin{proof}[Proof of Proposition \ref{prop:further-regularity}]
		We have, for $u\in\mathcal{M}_p(I)$:
		\begin{align*}
			\int(1+ |x|^p) |B(u, u)|(\dd x) &\leq \iiint (1+|x|^p)K(\dd x, x_1, x_2)|u|(\dd x_1)|u|(\dd x_2) \\ 
			&\leq \iiint 1+C\big(1+|x_1|^p+|x_2|^p\big) |u|(\dd x_1)|u|(\dd x_2) \\ 
			&=\int |u|\dd x\int|u|\dd x + C\int \big(1+|x_1|^p\big) |u|(\dd x_1)\int |u|(\dd x_2)  \\
			&\quad + C\int |u|(\dd x_1)\int \big( 1+|x_2|^p\big)|u|(\dd x_2) \\ 
			&=\Vert u\Vert \Vert u\Vert + C\big(\Vert u\Vert_{\mathcal{M}_p(I)}\Vert u\Vert + \Vert u\Vert\Vert u\Vert_{\mathcal{M}_p(I)}\big)\leq C'\Vert u\Vert \Vert u\Vert_{\mathcal{M}_p(I)}, 
		\end{align*}
		for some $C'>0$.
		Hence we can apply Lemma \ref{lem:regularity-subspace} with $X=\mathcal{M}_p(I)$, which proves Proposition \ref{prop:further-regularity}.
	\end{proof}
	\begin{remark}[Robin Hood model]
		In the case of the Robin Hood model described in \eqref{eq:RH} above, we have
		\begin{equation*}
			\int |x|^pK_1(\dd x, x_1, x_2) =\frac{1}{2}\left( \big|(1-f) x_1+fx_2\big|^p + \big|f x_1+(1-f)x_2\big|^p \right) \leq |x_1|^p + |x_2|^p, 
		\end{equation*}
		so Proposition \ref{prop:further-regularity} can be applied for any $p\geq 1$.
	\end{remark}
	\begin{remark}[Sheriff of Nottingham model]
		In the case of the Sheriff of Nottingham model described in \eqref{eq:Sheriff-Nottingham} above, we have
		\begin{equation*}
			\int |x|^pK_2(\dd x, x_1, x_2) =\frac{1}{2}\left( \big|(1+f) x_1-fx_2\big|^p + \big|-f x_1+(1+f)x_2\big|^p \right) \leq 2^{p-1}(1+f)^p\big(|x_1|^p + |x_2|^p\big), 
		\end{equation*}
		so Proposition \ref{prop:further-regularity} can be applied for any $p\geq 1$.
	\end{remark}

	\section{Understanding  \eqref{1.1} in $L^1(\R)$} 
	\label{Section5}
	Recall that a Borel subset $A \in\mathcal{B}(I)$  is said to be \textbf{negligible} if and only if $A$ has a null Lebesgue measure. 
	We consider the following assumption on the kernel $K$.
	\begin{assumption}\label{ASS5.1}
	    For each negligible subset  $A\in\mathcal{B}(I)$, the set
		\begin{equation*} 
			\mathcal{N}(A):=\left\{(x_1, x_2)\in I\times I\,:\, K(A, x_1, x_2)\neq 0\right\}, 
		\end{equation*}
		is negligible. 
	\end{assumption}
We note that an equivalent way to state Assumption \ref{ASS5.1} is the following: for each negligible subset $A\in\mathcal{B}(I)$, we have
\begin{equation*}
    \iint_{I\times I} K(A, x_1, x_2)\dd x_1\dd x_2=0. 
\end{equation*}

	Thanks to the Radon-Nikodym Theorem (which is recalled in the Supplementary Materials as Theorem \ref{THA6}), we have the following characterization of kernels that define a bilinear mapping from $L^1(I)\times L^1(I)$ to $L^1(I)$.
	\begin{proposition}\label{PROP4.1}
		Let Assumption \ref{ASS1.1} be satisfied.  Then we have
		\begin{equation*} 
			B(u, v)\in L^1(I) \text{ for all } (u, v)\in L^1(I)\times L^1(I)
		\end{equation*}
		if, and only if, 	Assumption \ref{ASS5.1} is satisfied. 
	\end{proposition}
	
	\begin{proof}
		For simplicity, here we call $\mathcal{L}$ the one-dimensional Lebesgue measure, that is to say $\mathcal{L}(A)=\int_A \dd x$; and $\mathcal{L}^2:=\mathcal{L}\otimes\mathcal{L}$ the two-dimensional Lebesgue measure in $\R^2$. 
		
		\medskip 
		Let $u, v\in L^1(I)$ be given. Suppose that 
		$$
		\mathcal{L}^2(\mathcal{N}(A))=0,
		$$ for each $ A\in\mathcal{B}(I)$ with $\mathcal{L}(A)=0$. Then by definition (see \eqref{3.1}),  
		\begin{equation*}
			B(u, v)(A) = \iint_{I\times I} K(A, x_1, x_2)u(x_1)\dd x_1 v(x_2)\dd x_2 =0,
		\end{equation*}
		since $(x_1, x_2)\mapsto K(A, x_1, x_2)$ is equal to zero $\mathcal{L}^2$-almost everywhere in $\R^2$ by assumption. Therefore $B(u, v)$ is absolutely continuous with respect to  the Lebesgue measure $\mathcal{L}$, and by  the Radon-Nikodym Theorem \ref{THA6}, we can find function $f\in L^1(I)$ such that 
		\begin{equation*}
			B(u, v)(dx)= f(x)\dd x,
		\end{equation*}
		which is equivalent to 
		\begin{equation*}
			B(u, v)\in L^1(I). 
		\end{equation*}
		Conversely, assume that $B(u, v)\in L^1(I)$ for any $(u, v)\in L^1(I)^2$. If $I$ is bounded then $1\in L^1(I)$, so taking $u=v=1$, and $B(1, 1)(\dd x) = f(x)\dd x$ with $f\in L^1(I)$  gives 
		\begin{equation*}
			B(u, v)(A) = \iint_{I\times I} K(A, x_1, x_2)\dd x_1\dd x_2=\int_{A} f(x)\dd x = 0, 
		\end{equation*}
		whenever $\mathcal{L}(A)=0$, and we are done. 
		
		Let us consider the case when $I$ is not bounded. Assume that  $A\in \mathcal{B}(I)$  is negligible. Define  
		$$
		u_n(x)=v_n(x)=\mathbbm{1}_{[-n, n]\cap I}(x)
		$$ 
		where $x \mapsto \mathbbm{1}_{E}(x)$ is the indicator function of the set $E$. 
		
		Then,	we have by assumption, 
		$$
		B(u_n, v_n)=f_n(x)\dd x
		$$
		for some $f_n\in L^1(I)$. 
		
		\medskip 
		\noindent 	Moreover 
		\begin{equation*}
			B\big(u_n, v_n\big)(A) = \iint_{(I\cap [-n, n])\times (I\cap [-n,n])} K(A, x_1, x_2)\dd x_1\dd x_2=\int_{A} f_n(x)\dd x = 0,
		\end{equation*}
		thus
		\begin{equation*}
			\mathcal{L}\big(\mathcal{N}(A)\cap [-n, n]^2\big) = 0 \text{ for all }n\in\mathbb{N}. 
		\end{equation*}
		Finally since we have an increase sequence of subsets, we obtain 
		\begin{equation*}
			\mathcal{L}\big(\mathcal{N}(A)\big) = \mathcal{L}\left(\mathcal{N}(A)\cap \bigcup_{n\in\mathbb{N}}[-n, n]^2\right) = \lim_{n\to+\infty} \mathcal{L}\left(\mathcal{N}(A)\cap [-n, n]^2\right)=0,
		\end{equation*}
		and	the proof is completed. 		
	\end{proof}
	Since the norm in the space of measure coincides with the $L^1$ norm for an $L^1$ function, we deduce that $T$ maps $L^1_+(I)$ into $L^1_+(I)$  into itself, and the following  statements are consequences of Theorem \ref{TH3.3}, and Proposition \ref{PROP4.1}.  
	\begin{theorem}	\label{thm:wp-L1}
	    Let Assumption \ref{ASS1.1} and \ref{ASS5.1} be satisfied and consider the Cauchy problem
	    \begin{equation} \label{4.1}
			\partial_t u(t, x) =2 \tau \, T\big(  u(t)\big) -  2 \tau \, u(t, x), 
		\end{equation}
		with 
		\begin{equation} \label{4.2}
			u(0,x)=\phi(x) \in L^1_+ (I).  
		\end{equation}
		The Cauchy problem \eqref{4.1}-\eqref{4.2} generates a unique semiflow which is the restriction of $S(t)$ to $L^1_+(I)$. We deduce that   
		$$
		S(t)L^1_+(I) \subset L^1_+(I), \forall t \geq 0,  
		$$
		and the semiflow $t \to S(t)\phi $ restricted to $L^1_+ (I)$ satisfies the following properties: 
		\begin{itemize}
			\item[{\rm (i)}]\textbf{(Continuity)}  The map $(t,\phi ) \to S(t) \phi $ is a continuous map from $ [0, +\infty) \times L^1_+(I)$ to $ L^1_+(I)$. 
			\item[{\rm (ii)}]\textbf{(Preservation of the total mass of individuals)}  The total mass of individuals is preserved 
			$$
			\int_I	S(t)(\phi)  (x)dx=  \int_I \phi(x)dx ,\forall t \geq 0,  \forall \lambda\geq 0, \forall \phi \in \mathcal{M}_+(I).
			$$ 
		\end{itemize}
		If moreover Assumption \ref{ASS1.2} holds, then we have in addition
		\begin{itemize}
			\item[{\rm (iii)}]\textbf{(Preservation of the total mass of transferable quantity)}  The total mass of transferable quantity is preserved 
			$$
			\int_I	xS(t)(\phi)  (x)dx=  \int_I x \phi(x)dx ,\forall t \geq 0,  \forall \lambda\geq 0, \forall \phi \in \mathcal{M}_+(I).
			$$ 
		\end{itemize}
	\end{theorem}
	\begin{example}[Robin Hood model]
		
		Let $K(\dd x, x_1, x_2)=K_1(\dd x, x_1, x_2)=\frac{1}{2}\big(\delta_{x_2-f(x_2-x_1)}(\dd x)+\delta_{x_1-f(x_1-x_2)}(\dd x)\big)$. If $A\in \mathcal{B}(I)$ has zero Lebesgue measure, then we have:
		\begin{align*} 
			\mathcal{N}(A)&=\{(x_1, x_2)\in I\times I\,:\, K(A, x_1, x_2)>0\} \\ 
			&=\{(x_1, x_2)\,:\, x_2-f(x_2-x_1)=y \in A \text{  or } x_1-f(x_1-x_2)=z\in A\}\\
			&=\left\{(x_1, x_2)\,:\,x_1=\frac{1-f}{1-2f}z-\frac{f}{1-2f}y  \text{ and }  \right. \\
			& \hspace{1cm}\left. x_2=\frac{1-f}{1-2f}y-\frac{f}{1-2f}z \text{ and } \big(y\in A \text{ or } z\in A\big)\right\}\\
			&=\left\{\left(\frac{1-f}{1-2f}z-\frac{f}{1-2f}y, \frac{1-f}{1-2f}y-\frac{f}{1-2f}z\right)\,:\, y\in A, z\in I\right\} \\
			&\quad \cup \left\{\left(\frac{1-f}{1-2f}z-\frac{f}{1-2f}y, \frac{1-f}{1-2f}y-\frac{f}{1-2f}z\right)\,:\, y\in I, z\in A\right\}.
		\end{align*}
		The two sets above have zero Lebesgue measure because they are the image of $A\times I$ and $I\times A$ by a linear invertible transformation. Therefore $\mathcal{N}(A)$ has zero Lebesgue measure and we can apply Proposition \ref{PROP4.1}.
		
	\end{example}
	\begin{example}[Sheriff of Nottingham model]
		
		Let $K(\dd x, x_1, x_2)=K_2(\dd x, x_1, x_2)=\frac{1}{2}\big(\delta_{x_2+f(x_2-x_1)}(\dd x)+\delta_{x_1+f(x_1-x_2)}(\dd x)\big)$. If $A\in \mathcal{B}(\mathbb{R})$ has zero Lebesgue measure, then we have:
		\begin{align*} 
			\mathcal{N}(A)&=\{(x_1, x_2)\in I\times I\,:\, K(A, x_1, x_2)>0\} \\ 
			&=\{(x_1, x_2)\,:\, x_2+f(x_2-x_1)=y \in A \text{  or } x_1+f(x_1-x_2)=z\in A\}\\
			&=\left\{\left(\frac{1+f}{1+2f}z+\frac{f}{1+2f}y, \frac{1+f}{1+2f}y+\frac{f}{1+2f}z\right)\,:\, y\in A, z\in I\right\} \\
			&\quad \cup \left\{\left(\frac{1+f}{1+2f}z+\frac{f}{1+2f}y, \frac{1+f}{1+2f}y+\frac{f}{1+2f}z\right)\,:\, y\in I, z\in A\right\}.
		\end{align*}
		The two sets above have zero Lebesgue measure because they are the image of $A\times \mathbb{R}$ and $\mathbb{R}\times A$ by a linear invertible transformation. Therefore $\mathcal{N}(A)$ has zero Lebesgue measure and we can apply Proposition \ref{PROP4.1}.
		
	\end{example}
	Similarly, the mixed Robin Hood and Sheriff of Nottingham model also define a bilinear mapping from $L^1\times L^1$ to $L^1$.
	\begin{example}[Distributed Robin Hood or Sheriff of Nottingham models]	\label{ex:Distributed-RH}
		The kernel of the distributed Robin Hood  model consists in replacing the Dirac mass centered a $0$, by $x \to g(x)\in L^1(I)$ a density of probability centered at $0$. That is 
		\begin{equation}\label{eq:distributed-RH}
			K_3(\dd x, x_1, x_2) =\frac{1}{2}  \bigg\{ g\left( x- \left[ x_2 - f( x_2-x_1) \right] \right)+ g\left( x- \left[ x_1 - f(x_1-x_2) \right] \right) \bigg\} \, \d x.
		\end{equation}
		Similarly, the kernel of the distributed Sheriff of Nottingham model is the following 
		\begin{equation*}
			K_4(\dd x, x_1, x_2) :=\frac{1}{2} \bigg\{ g\left( x- \left[ x_2 + f( x_2-x_1) \right] \right)+ g\left( x- \left[ x_1 + f(x_1-x_2) \right] \right) \bigg\}  \, \d x. 
		\end{equation*}
		Let $K(\dd x, x_1, x_2)=K(x, x_1, x_2)\dd x$ with $K(x, x_1, x_2)\in L^1(I)$ for any $(x_1, x_2)\in I\times I$. Examples are the distributed Robin Hood model, distributed Sheriff of Nottingham model, and distributed mixed Robin Hood and Sheriff of Nottingham model.  If $A\in \mathcal{B}(I)$ has zero Lebesgue measure, then we have automatically
		\begin{align*} 
			K(A, x_1, x_2) = \int_{A} K(x, x_1, x_2)\dd x = 0 \text{ for any } (x_1, x_2)\in I\times I,  \text{ so } \mathcal{N}(A)=\varnothing.
		\end{align*}
		Therefore we can apply Proposition \ref{PROP4.1}.
		
	\end{example}
	
	\section{Asymptotic behavior}
	In this section we prove some qualitative results about the asymptotic behavior of the (distributed) Robin Hood and Sheriff of Nottingham models. 

	\subsection{Robin Hood model}
	\label{sec:asymptotic-robin-hood}
	In the case of the Robin Hood model, we can describe the asymptotic behavior of the solutions starting from an initial measure with finite second moment, thanks to the explicit dynamics of the variance. The following formula was remarked in Bisi \cite{Bisi-2016}, but we recall here the computations for completeness. Set $\tau=\frac{1}{2}$ and $\int_I u_0(\dd x)=1$ for simplicity. 	Suppose that $u_0(\dd x)\in \mathcal{P}_2(I)$ with $I$ bounded or not, then the solution $u(t, \dd x)$ of \eqref{1.1} with $u(0, \dd x)=u_0(\dd x)$ belongs to $\mathcal{P}_2(I)$ by Proposition \ref{prop:further-regularity}, so in particular $M_2\big(u(t, \dd x)\big) <+\infty$ and $V(u)<+\infty$ for all $t>0$. We have
	\begin{align*}
	    \dfrac{\dd}{\dd t} V(u) &= \int_I\int_I\int_I \big(x-M_1(u)\big)^2 K_1(\dd x, x_1, x_2) u(\dd x_1) u(\dd x_2) - \int_I \big(x-M_1(u)\big)^2 u(\dd x)  \\
	    &= \int_I\int_I\int_I \big(x-M_1(u)\big)^2 \delta_{(1-f)x_1+fx_2}(\dd x) u(\dd x_1) u(\dd x_2)- V(u) \\
	    &= \int_I\int_I \big((1-f)x_1+fx_2-M_1(u)\big)^2 u(\dd x_1) u(\dd x_2)- V(u) \\
	    &= \int_I\int_I (1-f)^2 x_1^2 + f^2 x_2^2 + M_1(u)^2  + 2f(1-f)x_1 x_2 \\ 
	    &\quad\qquad  - 2M_1(u)\big((1-f)x_1+fx_2)\big)u(\dd x_1) u(\dd x_2)- V(u) \\
	    &=(1-f)^2 M_2(u)+f^2 M_2(u) +M_1(u)^2+ 2f(1-f) M_1(u)^2 -2M_1(u)^2 - V(u) \\
	    &=\big[(1-f)^2 +f^2\big] \big(M_2(u)-M_1(u)^2\big)+\big[(1-f)^2 +f^2 + 2f(1-f)-1\big]M_1(u)^2 - V(u)  \\
	    &=\big[(1-f)^2 +f^2-1\big] V(u)=-2f(1-f) V(u),
	\end{align*}
	so the variance of $u$ converges exponentially fast towards $0$. Since moreover $M_1(u) = \int_I xu(\dd x)$ is a constant, we obtain the following result. 
	\begin{proposition}[Convergence of the Robin Hood model]
		Let $I$ be an  interval of $\mathbb{R}$ and $u_0\in \mathcal{P}_2(I)$. Then the solution $u(t, \dd x)$ to $\eqref{1.1}$ with $u(0, \dd x)= u_0(\dd x)$ and $K=K_1$ as in \eqref{eq:RH}, converges in the sense of the weak-$\star$ topology towards the Dirac mass centered at $M_1(u_0)$, 
	    \begin{equation*}
		u(t, \dd x)\xrightarrow[t\to+\infty]{\text{weak}-\star}u_\infty(\dd x)=\delta_{M_1(u_0)}(\dd x).
	    \end{equation*}
	    In other words, for each $\varphi\in BC(I)$ we have
	    \begin{equation*}
		\int_I \varphi(x) u(t, \dd x) \xrightarrow[t\to+\infty]{} \varphi\big(M_1(u_0)\big).
	    \end{equation*}
	\end{proposition}
	Note that it is hopeless to obtain a strong convergence for all $u_0\in \mathcal{M}_+(I)$; indeed $u(t, \dd x)\in L^1(\mathbb{R})$ if $u_0(\dd x)\in L^1(\mathbb{R})$ by Theorem \ref{thm:wp-L1}.

	\subsection{Distributed Robin Hood model}
	\label{sec:asymptotic-distributed-robin-hood}
	Recall the distributed Robin Hood model $K_3$ defined in \eqref{eq:distributed-RH}. Here we assume that $g\in L^1_+(\mathbb{R})$ is a probability density centered at $0$ (i.e. $\int g(x)\dd x=1$, $\int_{\mathbb{R}} xg(x)\dd x=0$). We also assume that  $x^p g(x)\in L^1(\mathbb{R})$ for some $p\geq 1$. We have
	\begin{align*}
		\int |x|^p K_3(\dd x, x_1, x_2) & = \int \frac{1}{2}\left( \big|x+(1-f)x_1+f x_2\big|^p+ \big|x+fx_1+(1-f) x_2\big|^p\right)g(x)\dd x  \\ 
		&\leq \int \frac{1}{2}\left( \big|(1-f)(x+x_1)|^p +|f(x+ x_2)|^p+ \big|f(x+x_1)|^p +|(1-f)(x+ x_2)|^p\right)g(x)\dd x \\
		&\leq \int \big[(1-f)^p+f^p\big]2^{p-1}\big(2|x|^p+|x_1|^p + |x_2|^p\big) g(x)\dd x \\ 
		&\leq \big[(1-f)^p+f^p\big]2^{p-1}\left( 2\int |x|^p g(x)\dd x+|x_1|^p+|x_2|^p\right),
	\end{align*}
thus we can apply  Proposition \ref{prop:further-regularity} and the $n$-th moments of $u(t, \dd x)$ are well-defined for all $t>0$ for all $n\leq p$. We can then prove the following result:
	\begin{proposition}[Convergence of the moments]\label{prop:conv-moments}
		Let $g\in L^1_+(\mathbb{R})$ be such that $|x|^pg(x)\in L^1(\mathbb{R})$, $\int g(x)\dd x=1$, and  $\int xg(x)=0$, and $f\in(0,1)$. Let $u_0\in\mathcal{P}_p(\mathbb{R})$ and $S(t)$ the semiflow given by Theorem \ref{TH3.3} with $K=K_3$ as defined in \eqref{eq:distributed-RH}. Then there exists a unique sequence of real numbers $(m_n)_{0\leq n\leq p}$, which depends only on $M_1(u_0)$, such that 
	    \begin{equation*}
		M_n(S(t)u_0)\xrightarrow[t\to+\infty]{} m_n, \text{ for all }n\in\mathbb{N} , n\leq p.
	    \end{equation*}
	    In particular if $p\geq 2$, then  $V(S(t)u_0)$ converges to a finite number:
	    \begin{equation} \label{eq:limit-variance}
		V\big(S(t)u_0\big)\xrightarrow[t\to+\infty]{} \frac{1}{2f(1-f)} V(g).
	    \end{equation}
	\end{proposition}
	\begin{proof}
	    Let us start with the case of the variance for which the computations are easier to track.  We have:
	    \begin{align*}
		\frac{\dd }{\dd t}M_2(u)&= \iiint \big(x+(1-f)x_1+fx_2\big)^2 g(x) \dd x u(\dd x_1) u(\dd x_2) - M_2(u) \\ 
		&= \int x^2 g(x) \dd x + (1-f)^2 \int x_1^2 u(\dd x_1) + f^2 \int x_2^2 u(\dd x_2) \\ 
		&\quad + 2(1-f)\int xg(x)\dd x \int x_1u(\dd x_1)+2f \int xg(x)\dd x \int x_2u(\dd x_2)\\
		&\quad + 2f(1-f) \int x_1 u(\dd x_1)\int x_2 u(\dd x_2) - M_2(u)\\
		&=M_2(g) + \big[(1-f)^2 + f^2 \big] M_2(u) + 2f(1-f) M_1(u)^2 - M_2(u) \\ 
		&=M_2(g) - 2f(1-f) M_2(u) + 2f(1-f)M_1(u)^2.
	    \end{align*}
	    Here we wrote $u$ instead of $u(t, \dd x)$ to avoid unnecessarily long lines. 
	    By solving explicitly this equation, it is not difficult to show that $M_2$ converges to a finite value, which satisfies
	    \begin{equation*}
		M_2\big(u(t, \dd x)\big)\xrightarrow[t\to+\infty]{} m_2:=\frac{1}{2f(1-f)} M_2(g)+M_1(u_0)^2. 
	    \end{equation*}
	    Hence we recover \eqref{eq:limit-variance} by remarking that $V(u)=M_2(u)-M_1(u)^2$.\medskip

	    More generally, fix $n\in\mathbb{N}$, $n\leq p$, then we have
	    \begin{align*}
		\dfrac{\dd}{\dd t} M_n(u)&= \iiint \big(x+(1-f)x_1+fx_2\big)^n g(x)\dd x u(\dd x_1) u(\dd x_2)-M_n(u) \\
		&=\sum_{k=0}^n \sum_{i=0}^k \binom{n}{k}\binom{k}{i} M_{n-k}(g) (1-f)^{k-i}f^iM_{k-i}(u)M_i(u)  - M_n(u) \\ 
		&= \big[(1-f)^n+f^n-1\big] M_n(u) + \underset{(i, k)\not\in\{(0, n), (n,n)\}}{\sum_{1\leq i\leq k\leq n}} \binom{n}{k}\binom{k}{i} M_{n-k}(g) (1-f)^{k-i}f^iM_{k-i}(u)M_i(u) 
	    \end{align*}
	    Clearly $(1-f)^n+f^n-1<0$,  thus by an immediate recursion, we can prove that $M_n(u)$ converges to a number $m_n$ satisfying
	    \begin{equation*}
		m_n=\frac{1}{1-(1-f)^n-f^n}\underset{(i, k)\not\in\{(0, n), (n,n)\}}{\sum_{1\leq i\leq k\leq n}} \binom{n}{k}\binom{k}{i} M_{n-k}(g) (1-f)^{k-i}f^im_{k-i}m_i
	    \end{equation*}
	    Since $m_2$ depends only on $M_1(u_0)$, we obtain by induction that $m_n$ depends only on $M_1(u_0)$, thanks to the recursion formula. This finishes the proof of Proposition \ref{prop:conv-moments}.
	\end{proof}

	Actually, we can go a bit further and prove the weak convergence to a unique stationary distribution thanks to an argument inspired by Matthes and Toscani \cite{Matthes-Toscani-2008} and Pareschi and Toscani \cite{Pareschi-Toscani-2006}. We first define the Fourier transform of measures, 
	\begin{equation}
		\mathcal{F}[u]:= \int_{\mathbb{R}} e^{ix\xi} u(\dd x), \quad u\in\mathcal{M}(\mathbb{R}),
	\end{equation}
	where $i^2=-1$ is the complex imaginary unit. We introduce the space of probability measures with finite second moment and fixed first moment, 
	\begin{equation*}
		X_{M}:=\left\{u\in \mathcal{P}_{2}(\mathbb{R})\,:\, \int_\mathbb{R} xu(\dd x) = M\right\}, 
	\end{equation*}
	where $M\in\mathbb{R}$.
		Then for $s\in(0, 1)$ we introduce the $(1+s)$-Fourier distance
	\begin{equation}
		d_s(u, v):= \sup_{\xi\in\mathbb{R}\backslash\{0\}} \dfrac{|\mathcal{F}[u](\xi)-\mathcal{F}[v](\xi)|}{|\xi|^{1+s}}, u, v\in \mathcal{M}(\mathbb{R}).
	\end{equation}
	It is classical (see Carrillo and Toscani \cite[Proposition 2.7]{Carrillo-Toscani-2007}) that $d_s$ is a distance on $X_M$ and that the metric space $(X_M, d_s)$ is complete, for any $M\in\mathbb{R}$ and $s\in(0,1)$. Note that convergence in Fourier distance implies weak convergence in the sense of measures.

 We first prove the existence and uniqueness of a stationary solution in $X_M$.
	\begin{lemma}[Contractivity of the transfer operator] \label{lem:contractive-transfer}
		Let $M\in\mathbb{R}$, then under the assumptions of Proposition \ref{prop:conv-moments},  the operator $T$ leaves $X_M$ invariant and is a contraction on $X_M$. More precisely, 
		\begin{equation}\label{eq:contractive-transfer}
			d_s\big(T(u), T(v)\big)\leq \big[(1-f)^{1+s}+f^{1+s}\big] d_s(u, v), \qquad \,^\forall u,v \in X_M.
		\end{equation}
	\end{lemma}
	\begin{proof}

		We first prove the invariance of $X_M$.   We have, by the same computations as in the proof of Proposition \ref{prop:conv-moments}
:
	    \begin{align*}
		M_2\big(T(u)\big)&=\int x^2 T(u)(\dd x)= \iiint \big(x+(1-f)x_1+fx_2\big)^2 g(x) \dd x u(\dd x_1) u(\dd x_2) \\ 
		&=M_2(g) + \big[(1-f)^2 + f^2 \big] M_2(u) + 2f(1-f) M_1(u)^2 <+\infty.
	    \end{align*}
	    Then by Proposition \ref{PR3.2}, the first moment is preserved by $T$: $\int xT(u)\dd x = \int xu(\dd x)$. We have proved the stability of $X_M$.

		Next we prove the contractivity of $T$ for the distance $d_s$. We have
		\begin{align*}
		    \mathcal{F}[T(u)](\xi) &= \iiint e^{ix\xi}g\big(x-(1-f)x_1-fx_2\big)\dd xu(\dd x_1)u(\dd x_2) \\
			&= \int e^{ix\xi}g(x)\dd x\int e^{i(1-f)x_1 \xi}u(\dd x_1) \int e^{ifx_2} u(\dd x_2) = \mathcal{F}[g](\xi)\times \mathcal{F}[u]\big((1-f)\xi\big) \times \mathcal{F}[u]\big(f\xi\big),
		\end{align*}
		and we deduce that for any $\xi\in\mathbb{R}\backslash\{0\}$
		\begin{align*}
		    \dfrac{|\mathcal{F}[T(u)](\xi)-\mathcal{F}[T(v)](\xi)|}{|\xi|^{1+s}}&=\dfrac{\left| \mathcal{F}[g](\xi)\mathcal{F}[u]\big((1-f)\xi\big) \mathcal{F}[u]\big(f\xi\big) - \mathcal{F}[g](\xi)\mathcal{F}[v]\big((1-f)\xi\big) \mathcal{F}[v]\big(f\xi\big)\right|}{|\xi|^{1+s}} \\
			&\leq |\mathcal{F}[g](\xi)| \left( |\mathcal{F}[u]\big((1-f) \xi\big)|\dfrac{|\mathcal{F}[u](f\xi)-\mathcal{F}[v](f\xi)|}{|\xi|^{1+s}} \right. \\
			&\quad \left.+ |\mathcal{F}[v](f \xi)|\dfrac{|\mathcal{F}[u]\big((1-f)\xi\big)-\mathcal{F}[v]\big((1-f)\xi\big)|}{|\xi|^{1+s}}\right) \\ 
			&\leq (1-f)^{1+s} d_s(u, v)+f^{1+s} d_s(u,v)  = \big((1-f)^{1+s}+f^{1+s}\big) d_s(u, v).
		\end{align*}
		Taking the supremum over all $\xi\in\mathbb{R}\backslash \{0\}$, we obtain \eqref{eq:contractive-transfer}. The proof of Lemma \ref{lem:contractive-transfer} is completed.
	\end{proof}
	As an immediate consequence of Lemma \ref{lem:contractive-transfer} and the Banach fixed-point Theorem (certainly $(1-f)^{1+s}+f^{1+s}<1$ by the strict concavity of $f\mapsto f^{1+s}$), for each $M\in\mathbb{R}$ there exists a unique stationary distribution $u^\infty_M(\dd x)\in X_M$ such that 
	\begin{equation}\label{eq:stationary}
		T[u^\infty_M] =u^\infty_M.
	\end{equation}
	We deduce the following result.
	\begin{theorem}\label{eq:DistributedRH-stability}
		Let $p\geq 2$ and $f\in(0,1)$ be given and  $g\in L^1_+(\mathbb{R})$ be such that $|x|^pg(x)\in L^1(\mathbb{R})$, $\int g(x)\dd x=1$, and  $\int xg(x)=0$. 
		Let $u_0, v_0\in \mathcal{P}_2(\mathbb{R})$ be such that $\int xu_0(\dd x) = \int xv_0(\dd x)$. Then we have
		\begin{equation}\label{eq:contraction-orbits}
			d_s\big(S(t) u_0, S(t) v_0\big)\leq d_s(u_0, v_0) e^{-(1-(1-f)^{1+s}-f^{1+s}) t}, \qquad  \,^\forall t>0. 
		\end{equation}

		In particular 
		\begin{equation*}
			d_s(S(t)u_0, u^\infty_M)\xrightarrow[t\to+\infty]{}0, 
		\end{equation*}
		where $M=\int xu_0(\dd x)$ and $u^\infty_M$ is the unique solution of \eqref{eq:stationary} in $X_M$. 
	\end{theorem}
\begin{proof}
    Let us first prove \eqref{eq:contraction-orbits}. For notational simplicity, let $u(t, \dd x):=S(t)u_0(\dd x) $ and $v(t, \dd x) = S(t) v_0(\dd x)$; let us define moreover 
    \begin{equation*}
	d(t, \xi):= \frac{1}{|\xi|^{1+s}} \left(\mathcal{F}[u(t,\cdot)](\xi) - \mathcal{F}[u(t,\cdot)](\xi)\right).
    \end{equation*}
    Clearly $d(t, \xi)$ is continuously differentiable in time for any fixed $\xi\in\mathbb{R}\backslash\{0\}$ and we have
    \begin{align*}
	\frac{\partial}{\partial t}d(t, \xi) &= \frac{1}{|\xi|^s} \big(\mathcal{F}[T(u)(t, \cdot)](\xi)  - \mathcal{F}[u(t, \cdot)](\xi) - \mathcal{F}[T(v)(t, \cdot)](\xi)  + \mathcal{F}[v(t, \cdot)](\xi)\big),  \\ 
	&=\frac{1}{|\xi|^s} \big(\mathcal{F}[T(u)(t, \cdot)](\xi)  - \mathcal{F}[T(v)(t, \cdot)](\xi) \big) - d(t, \xi).
    \end{align*}
    Using the integration by parts formula in the above equation, we get 
    \begin{align*}
	d(t, \xi) = e^{-t} d(0, \xi) + \int_0^t e^{-(t-\sigma)} \frac{1}{|\xi|^s}\big(\mathcal{F}[T(u)(\sigma, \cdot)](\xi)  - \mathcal{F}[T(v)(\sigma, \cdot)](\xi) \big)\dd \sigma,
    \end{align*}
    thus by using \eqref{eq:contractive-transfer} and the triangle inequality we obtain
    \begin{align*}
	|d(t, \xi)|e^t &\leq |d(0, \xi)|+\int_0^t e^\sigma\frac{1}{|\xi|^s} \big|\mathcal{F}[T(u)(\sigma, \cdot)](\xi)  - \mathcal{F}[T(v)(\sigma, \cdot)](\xi) \big|\dd \sigma \\ 
	&\leq d_s(u_0, v_0)+\int_0^t\big[(1-f)^{1+s}+f^{1+s}\big] e^\sigma d_s\big(u(\sigma, \cdot), v(\sigma, \cdot)\big)\dd \sigma.
    \end{align*}
    By taking the supremum over all $\xi\in \mathbb{R}\backslash\{0\}$ we obtain
    \begin{equation*}
	d_s\big(u(t, \cdot), v(t, \cdot)\big)e^t\leq d_s(u_0, v_0)+\int_0^t \big[(1-f)^{1+s}+f^{1+s}\big]e^\sigma d_s\big(u(\sigma, \cdot), v(\sigma, \cdot)\big)\dd \sigma,
    \end{equation*}
    so by applying the integrated form of Gronwall's inequality 
    \begin{equation*}
	d_s\big(u(t, \cdot), v(t, \cdot)\big)e^t\leq d_s(u_0, v_0)e^{[(1-f)^{1+s}+f^{1+s}]t} .
    \end{equation*}
    This is exactly \eqref{eq:contraction-orbits}, which finishes the proof of the first part of the Theorem.

    Next, since $T(u^\infty_M) = u^\infty_M$ we have that $S(t) u^\infty_M = u^\infty_M \in \mathcal{P}_2(\mathbb{R})$ so we can apply the first part  of the Theorem to find that 
		\begin{equation*}
		    d_s\big(S(t) u_0, u^\infty_M)=d_s\big(S(t) u_0, S(t) u^\infty_M)\leq d_s(u_0, u^\infty_M) e^{-(1-(1-f)^{1+s}-f^{1+s}) t}\xrightarrow[t\to+\infty]{}0. 
		\end{equation*}
		This finishes the proof of Theorem \ref{eq:DistributedRH-stability}.
    \end{proof}
    \subsection{Sheriff of Nottingham model}
    In the case of the Sheriff of Nottingham model $K=K_2$ define in \eqref{eq:Sheriff-Nottingham} in section \ref{sec:Sheriff-Nottingham}, we do not expect a convergence of the moments but an explosion. Indeed, let $u_0\in\mathcal{P}_2(\mathbb{R})$ be given. Then reproducing the computations in section \ref{sec:asymptotic-robin-hood} leads to 
    \begin{equation*}
	\frac{\dd}{\dd t} V(u) = \big[(1+f)^2 + f^2 -1\big]V(u) = \epsilon V(u),  
    \end{equation*}
    where $u(t, \dd x)=S(t)u_0$ and  $\epsilon:=(1+f)^2 + f^2 -1$ is a positive constant. Thus 
    \begin{equation*}
	V(S(t)u_0)= V(0)e^{\epsilon t}\xrightarrow[t\to+\infty]{}+\infty.
    \end{equation*}
    Summarizing, we have the following result.
    \begin{proposition}[Sheriff of Nottingham: divergence of the variance]
	Let $u_0\in\mathcal{P}_2(\mathbb{R})$ satisfy $\int_{I} u_0(\dd x)=1$. Then the variance of $S(t)u_0$ explodes as $t\to+\infty$: 
    \begin{equation*}
	V(S(t)u_0)\xrightarrow[t\to+\infty]{}+\infty.
    \end{equation*}
    \end{proposition}
	In the case of the distributed Sheriff of Nottingham model, by reproducing the computations in the proof of Proposition \ref{prop:conv-moments} we obtain, assuming that the second moment of $u(t, \dd x)=S(t)u_0$ stays finite at all times,  
	\begin{equation*}
	    \frac{\dd}{\dd t} M_2(u) = M_2(g)-2f(1+f)M_1(u)^2 + \big[(1+f)^2+f^2-1\big]M_2(u)
	\end{equation*}
	so we obtain
	\begin{equation*}
	    M_2(u(t))\xrightarrow[t\to+\infty]{}+\infty.
	\end{equation*}
	Here again, the second moment (and hence the variance) cannot be uniformly bounded for all times. 

	\section{Numerical simulation}
	\label{Section6}
	We introduce $p \in [0,1]$, the population's redistribution fraction. The parameter $p$ is also the probability of applying the Robin Hood (RH) model during a transfer between two individuals. Otherwise, we use the Sheriff of Nottingham (SN) model with the probability $1-p$. In that case, the model is the following
	
	\begin{equation} \label{5.1}
		\partial_t u(t, dx) =2 \tau \,\left[p\, T_1\big(  u(t)\big)(dx)+(1-p) \, T_2\big(  u(t)\big)(dx)\right]  -  2 \tau \, u(t, dx), 
	\end{equation}
	with 
	\begin{equation} \label{5.2}
		u(0,dx)=\phi(dx) \in \mathcal{M}_+ (I).  
	\end{equation}
	
	In Figures \ref{Fig3}-\ref{Fig5}, we run an individual based simulation of the model \eqref{5.1}-\eqref{5.2}. Such simulations are stochastic. We first choose a pair randomly following an exponential law with average $1/\tau$. Then we choose the RH model with a probability $p$ and the SN model with a probability $1-p$. Then we apply the transfers rule described in section \ref{Section3}. To connect this problem with our description in the space of measures, we can consider an initial distribution that is a sum of Dirac masses. 
	$$
	\phi(dx) =\sum_{i=1}^{N} \delta_{x_i}(x),
	$$
	in which $x_i$ is the value of the transferable quantity for individual $i$ at $t=0$. 
	When the number of individuals becomes infinite (while keeping a fixed global wealth), the number of meeting events in any time interval becomes infinite and we expect to recover exactly the deterministic model \eqref{1.1} for the distribution of wealth. This is achieved by diminishing the weight of each individual in the population (i.e. replacing $\delta_{x_i}(x)$ by $\frac{1}{N}\delta_{x_i}(x)$). Note that it is not forbidden to have several individuals with the same wealth, and as such, populations consisting of sums of Dirac masses can be achieved in the limit.

	These simulations are, in some sense, a continuous-time version of the computations described in \cite{Chatterjee-Chakrabarti-2007}.
	\begin{figure}
		\begin{center}
			\textbf{(a)} \hspace{5cm} \textbf{(b)}\\
			\includegraphics[scale=0.12]{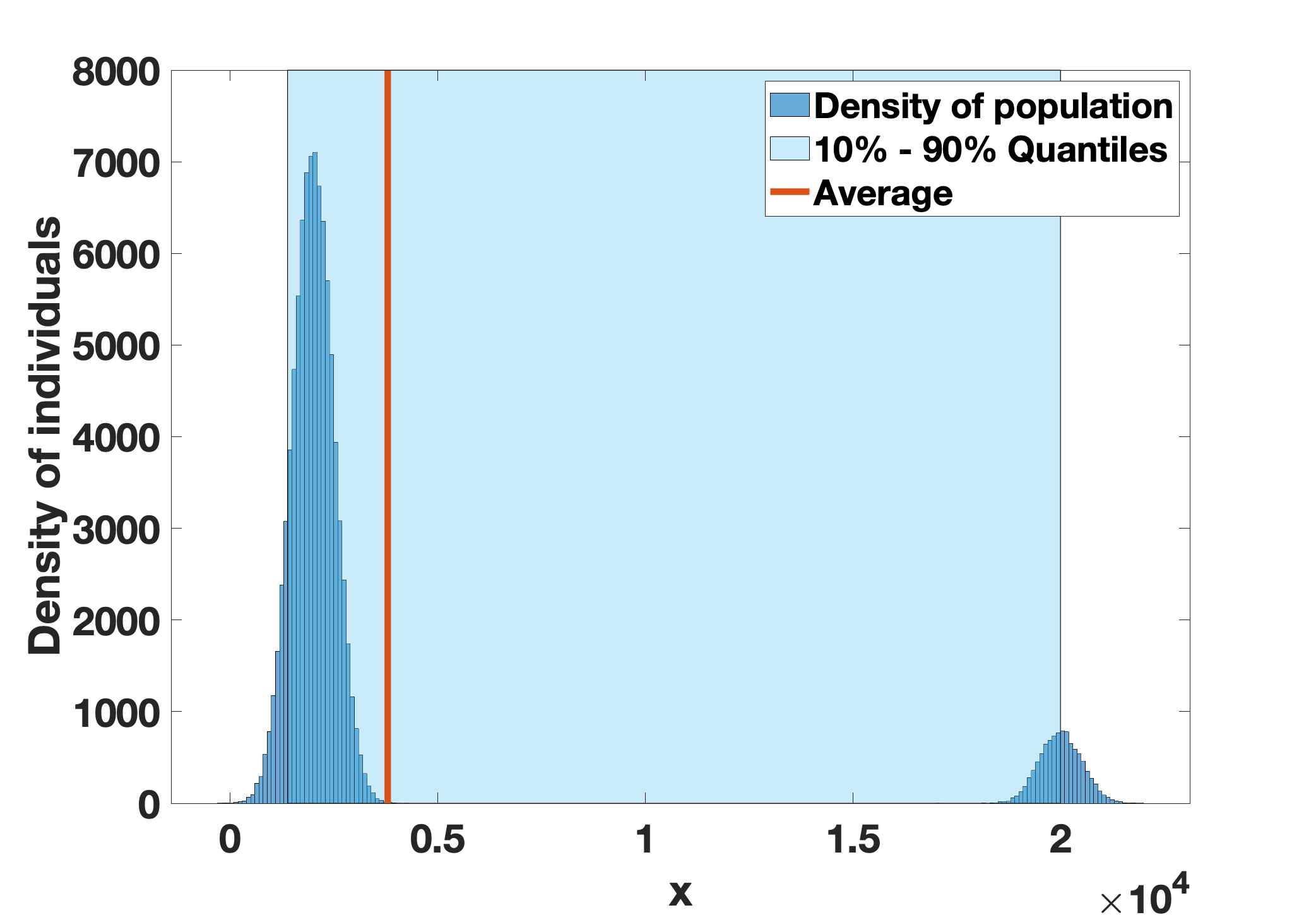} \hspace{1cm}
			\includegraphics[scale=0.12]{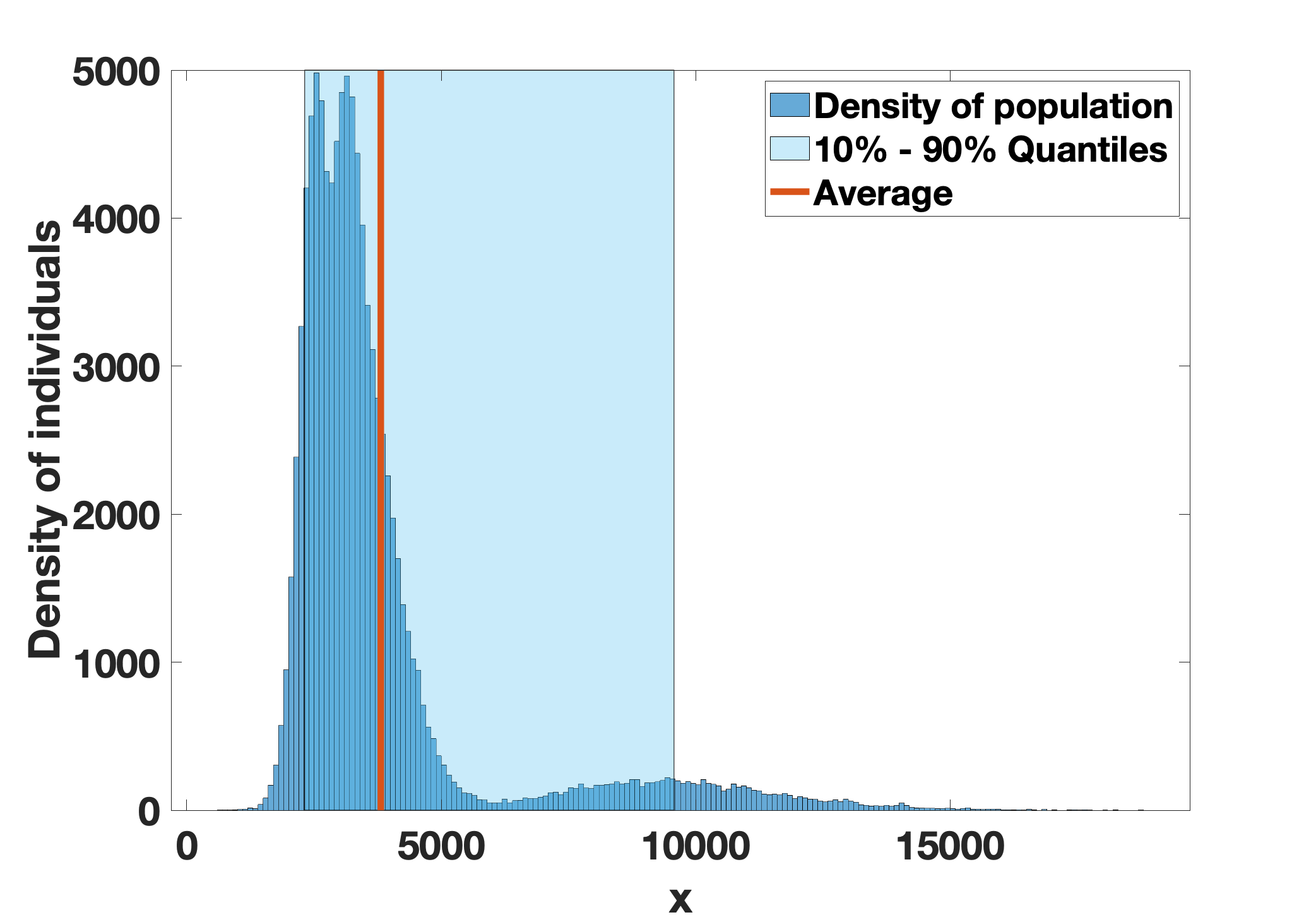}\\
			\textbf{(c)} \hspace{5cm} \textbf{(d)}\\
			\includegraphics[scale=0.12]{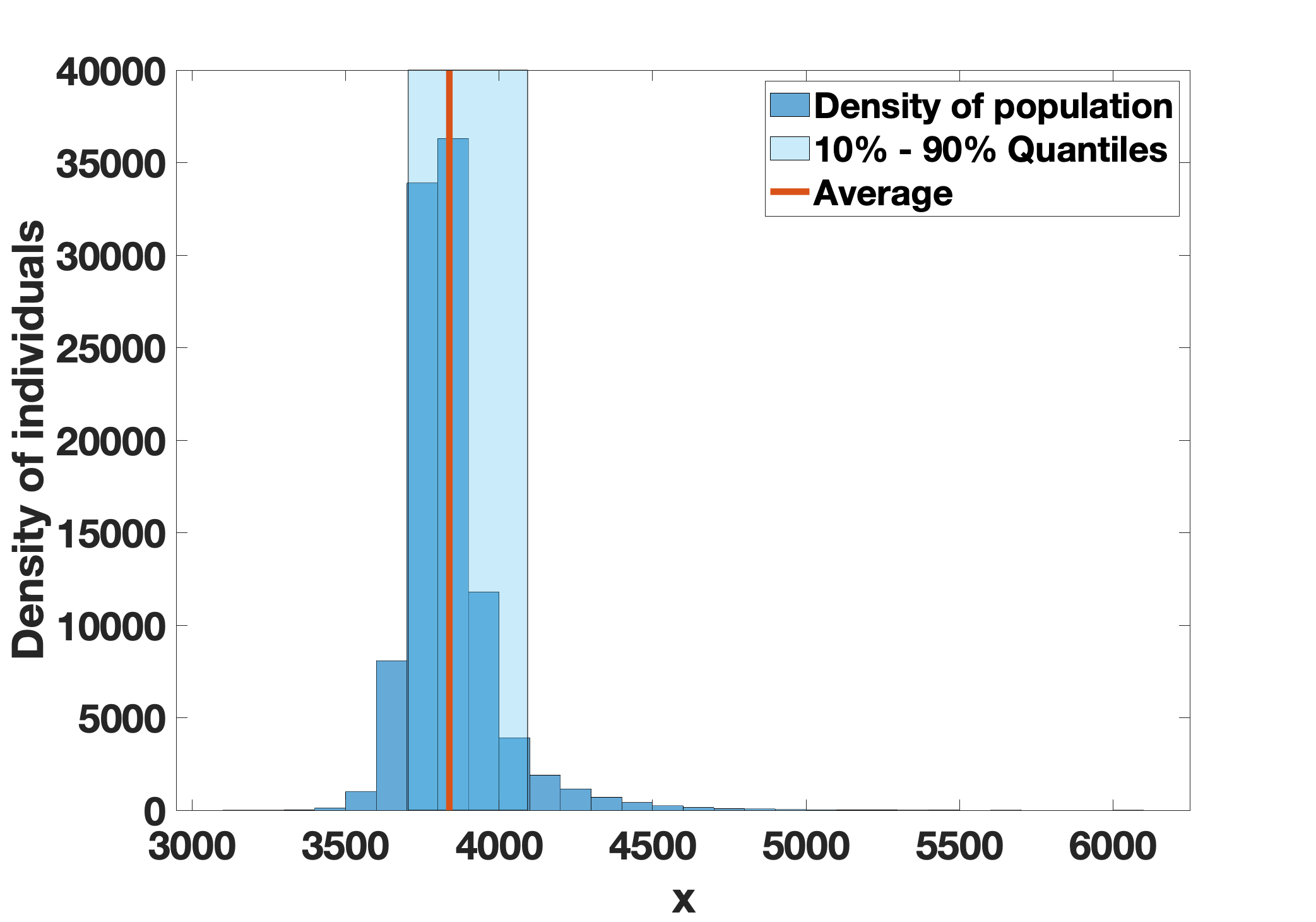} \hspace{1cm}
			\includegraphics[scale=0.12]{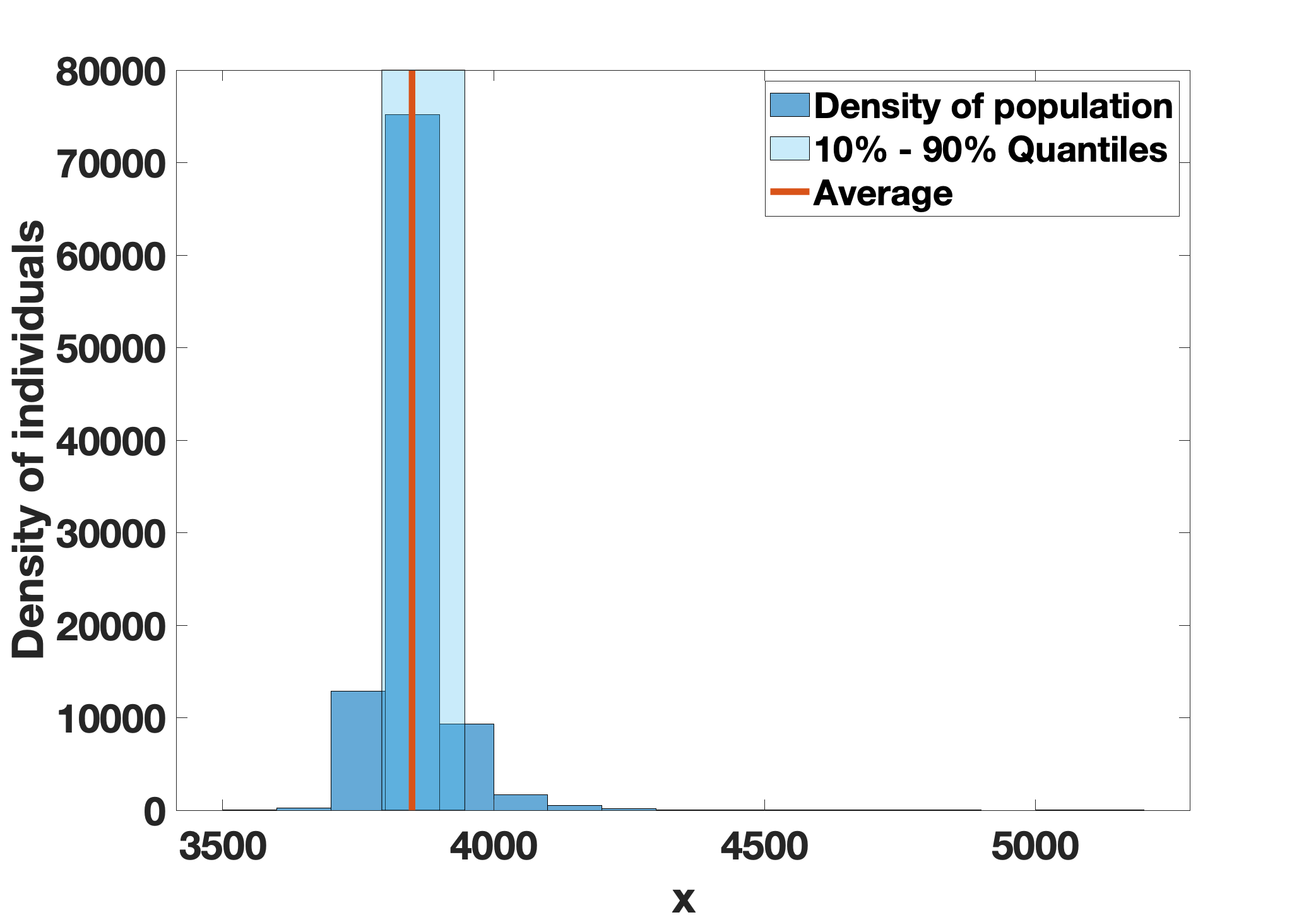}
		\end{center}
		\caption{\textit{In this figure, we use $p=1$ (i.e.   $100 \%$ RH model),  $f_1=f_2=0.1$, $1/\tau=1$ years. We start the simulations with $100 \, 000$ individuals. The figures (a) (b) (c) (d) are respectively the initial distribution at time $t=0$, and the distribution  $10$ years, $50$ years and $100$ years.    }}\label{Fig3}
	\end{figure}
	
	\begin{figure}
		\begin{center}
			\includegraphics[scale=0.12]{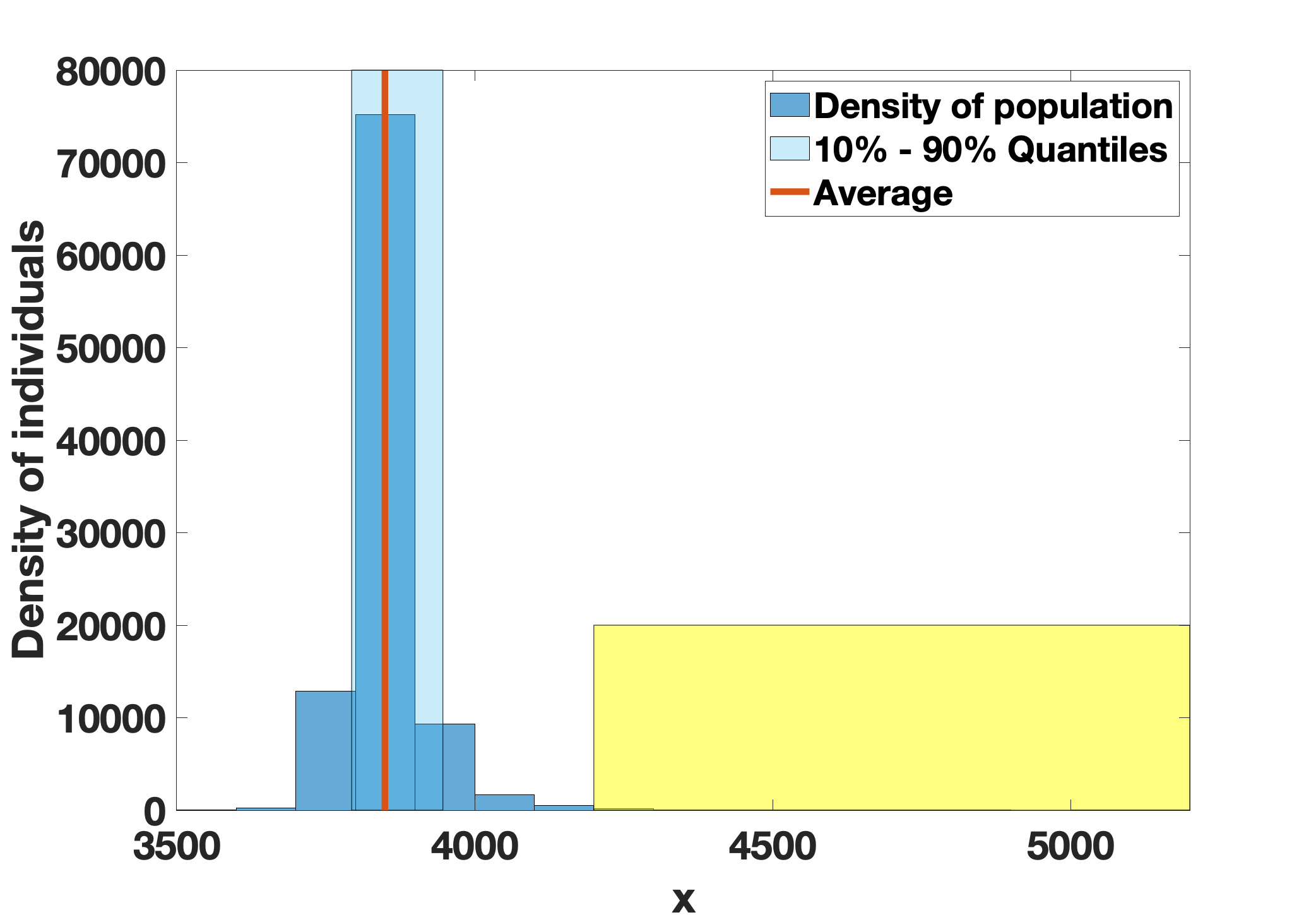} \hspace{1cm}
			\includegraphics[scale=0.12]{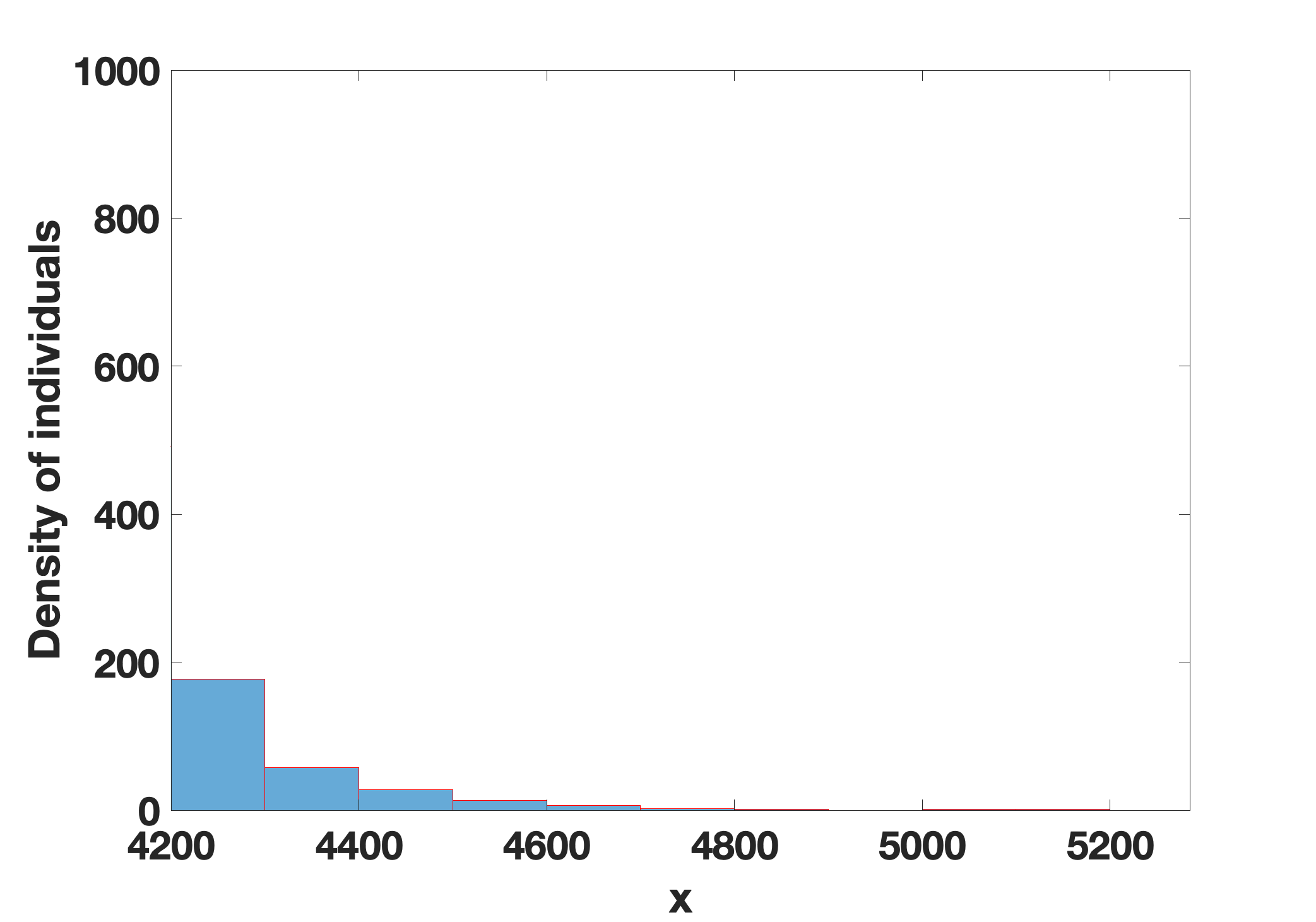}\\
		\end{center}
		\caption{\textit{In this figure, we zoom on the distribution for $t=100$ in Figure \ref{Fig3} (d).  The figure on the right-hand side corresponds to the yellow region in the left figure.  }}\label{Fig3A}
	\end{figure}

	\begin{figure}
		\begin{center}
			\textbf{(a)} \hspace{5cm} \textbf{(b)}\\
			\includegraphics[scale=0.12]{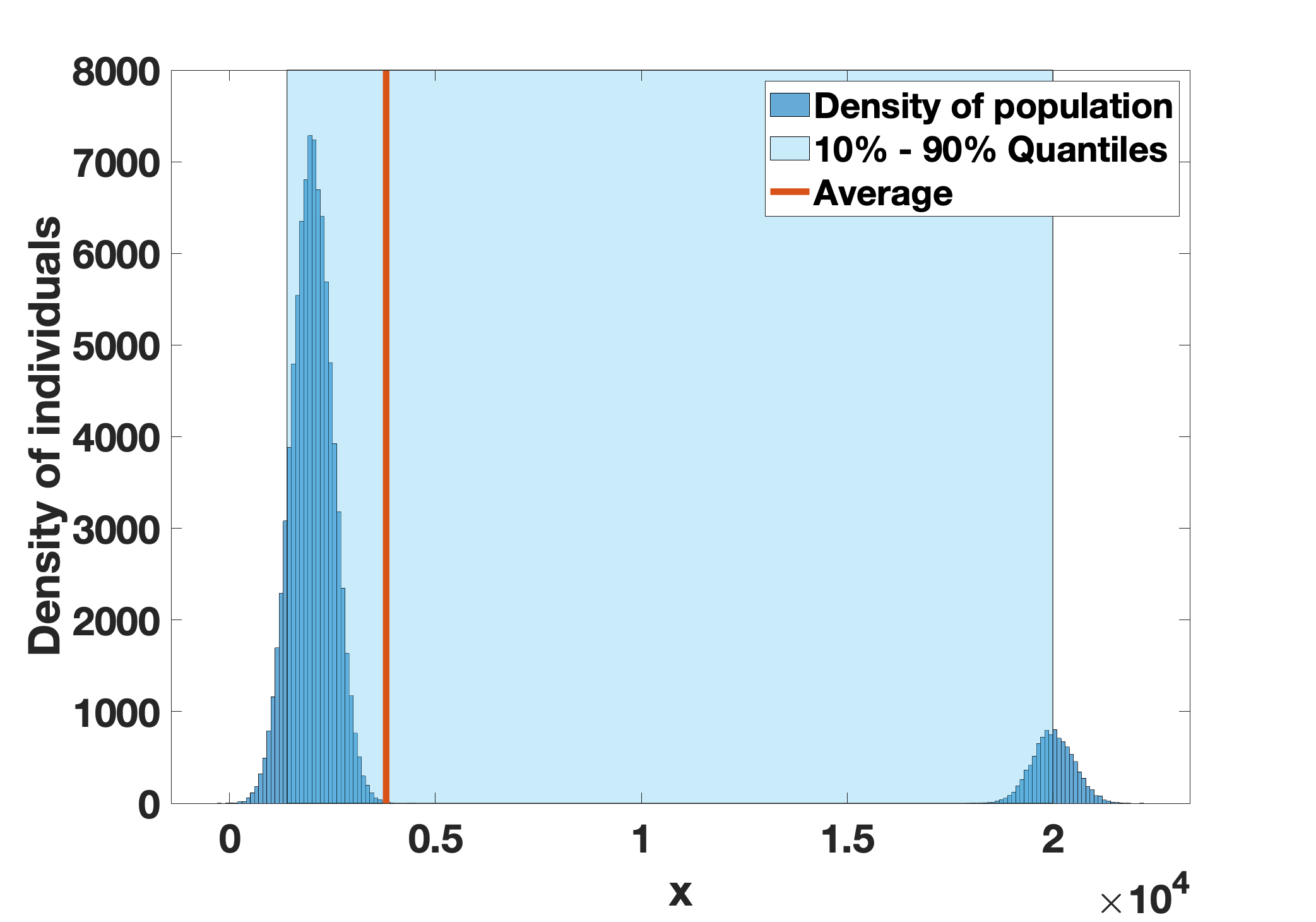}  \hspace{1cm}
			\includegraphics[scale=0.12]{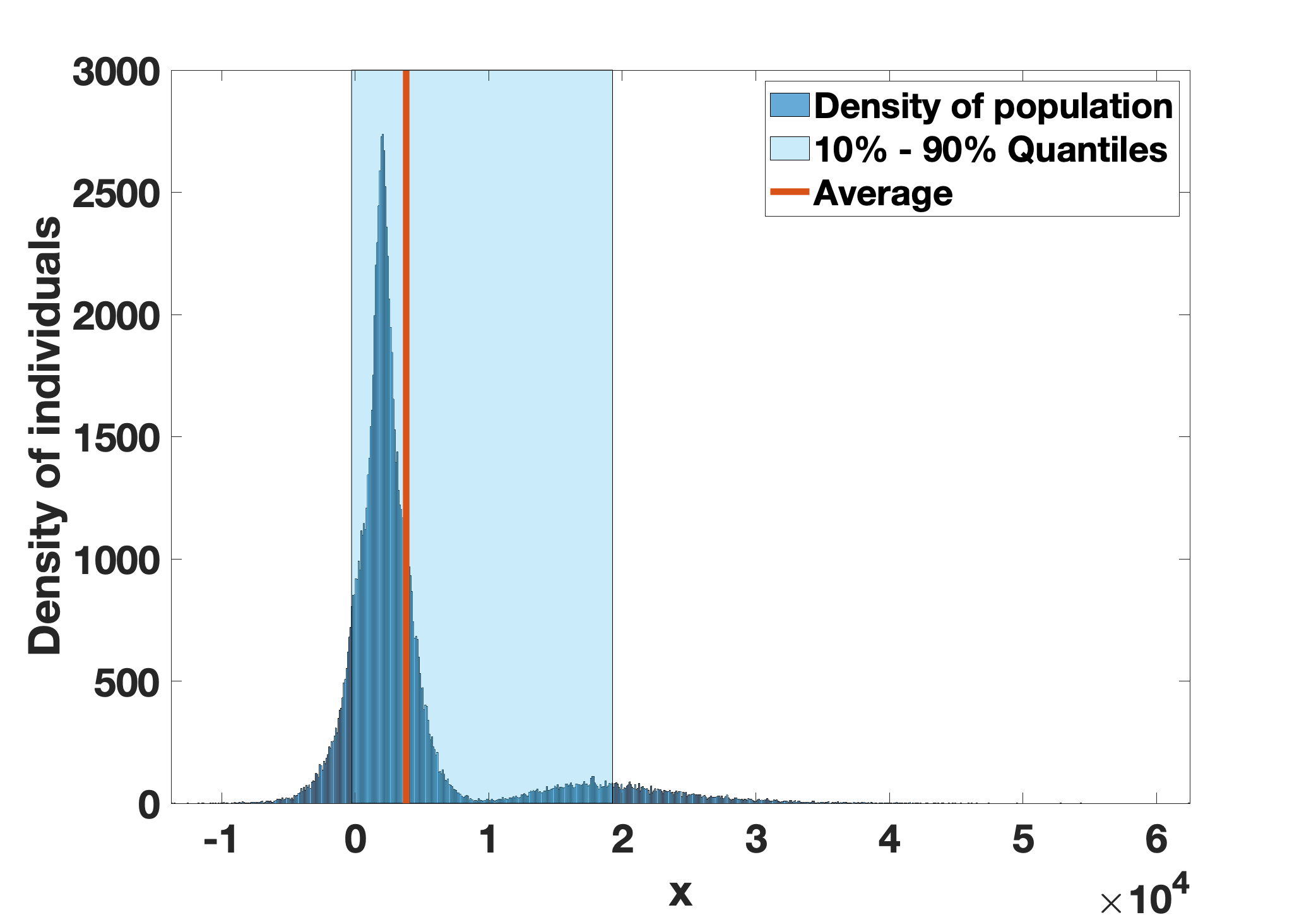}\\
			\textbf{(c)} \hspace{5cm} \textbf{(d)}\\
			\includegraphics[scale=0.12]{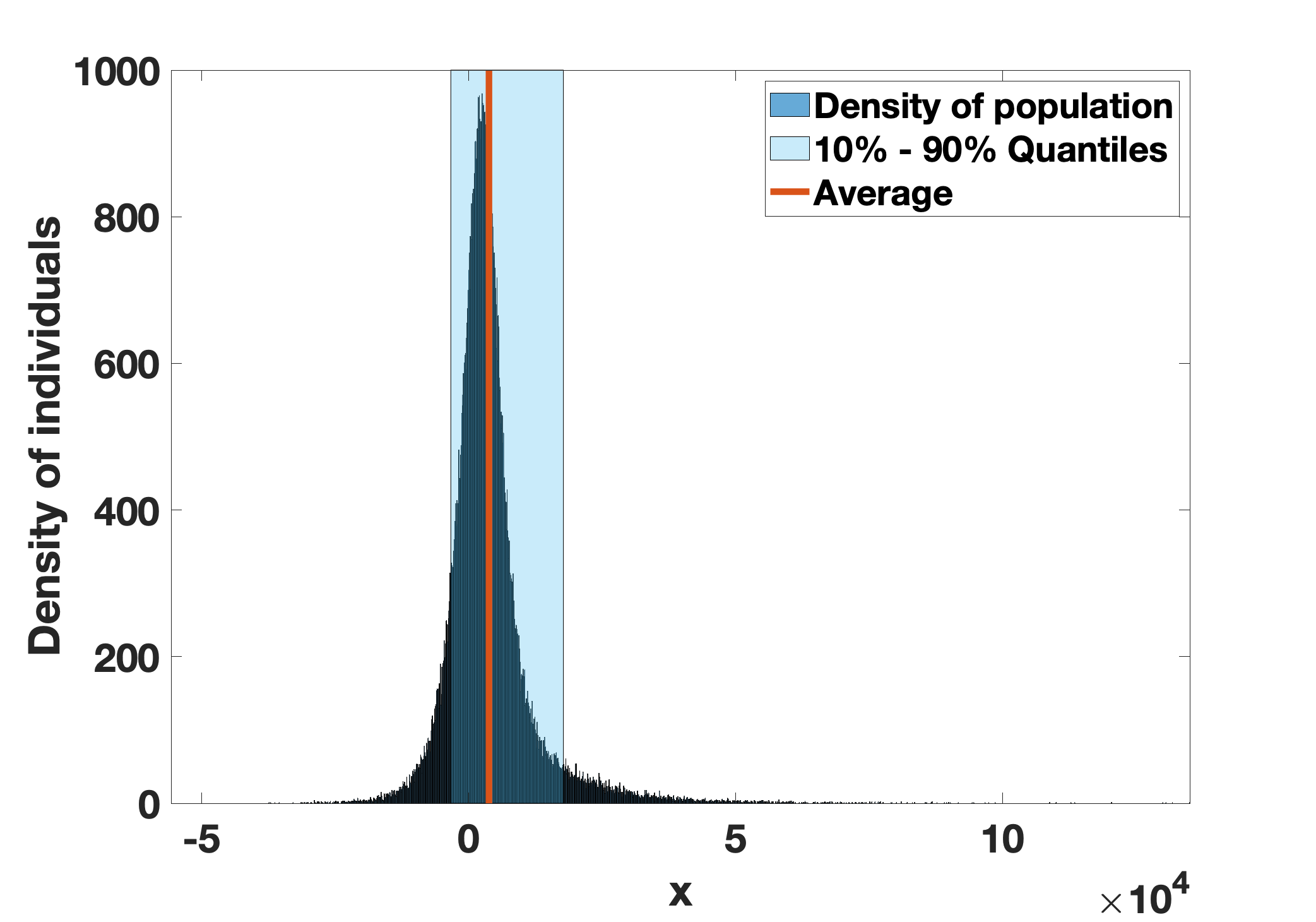}  \hspace{1cm}
			\includegraphics[scale=0.12]{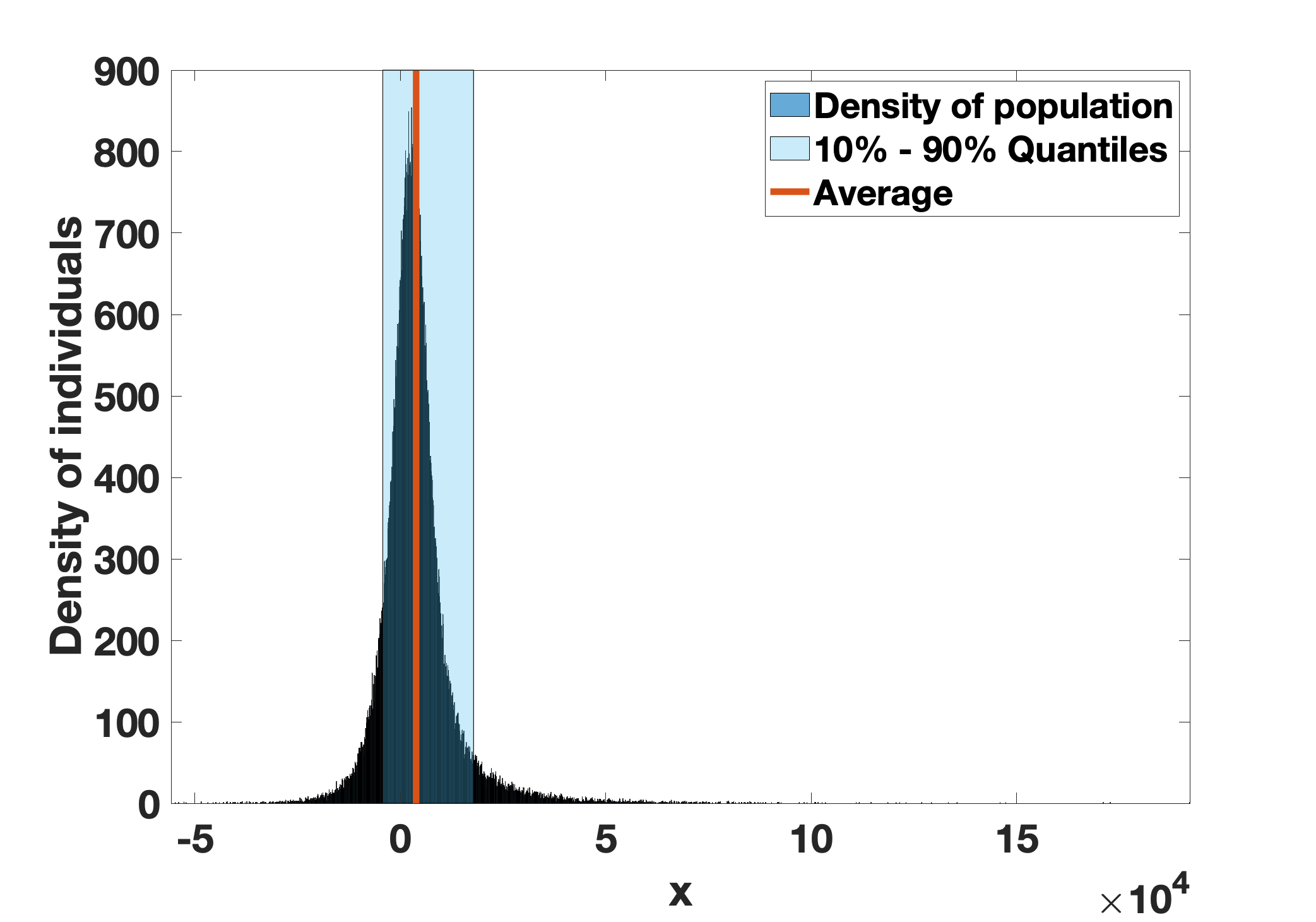}
		\end{center}
		\caption{\textit{In this figure, we use $p=0.5$ (i.e.   $50 \%$ RH model and $50\%$ SN model),  $f_1=f_2=0.1$, $1/\tau=1$ years. We start the simulations with $100 \, 000$ individuals. The figures (a) (b) (c) (d) are respectively the initial distribution at time $t=0$, and the distribution  $10$ years, $50$ years and $100$ years.     }}\label{Fig4}
	\end{figure}
	
	\begin{figure}
		\begin{center}
			\includegraphics[scale=0.12]{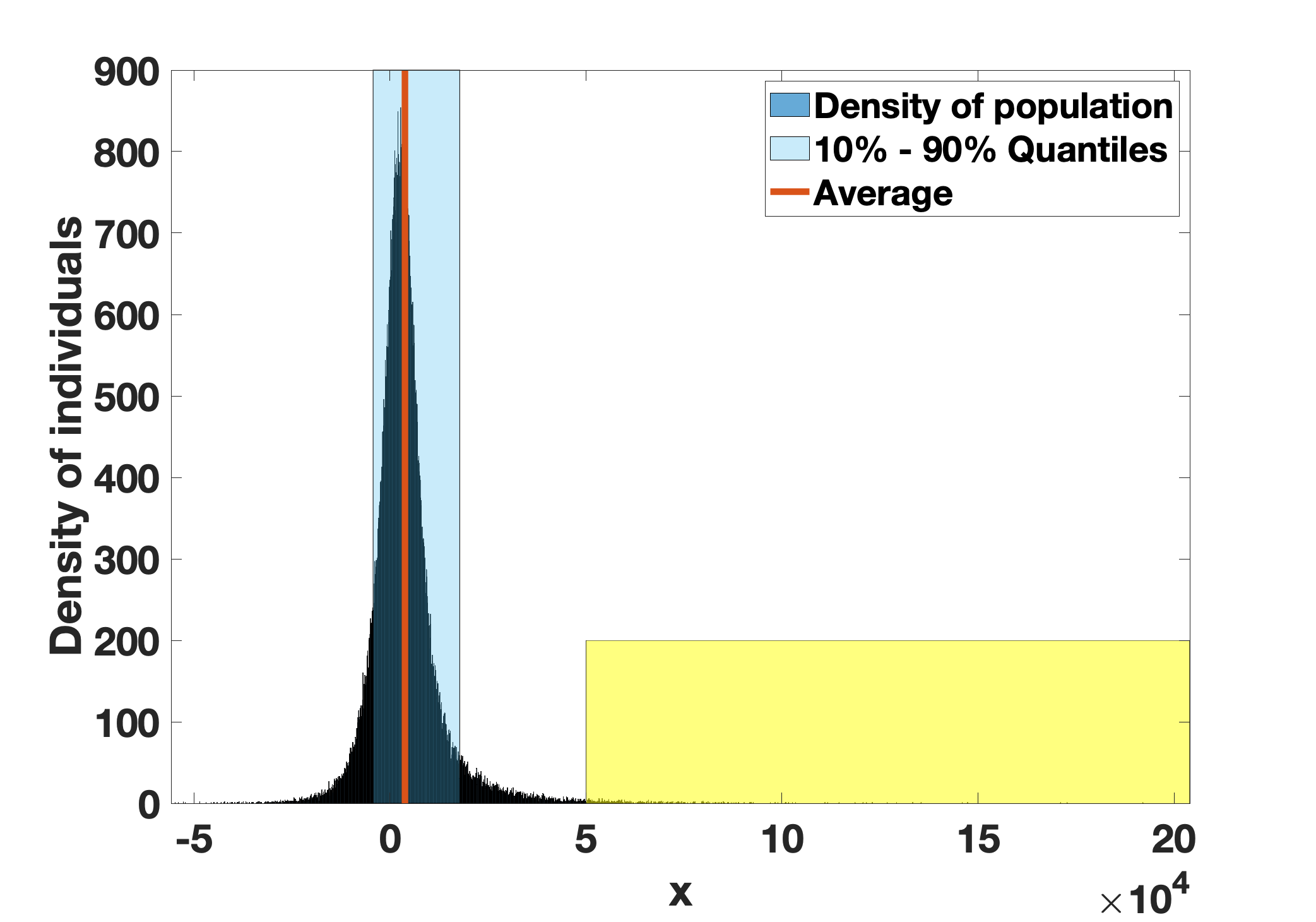} \hspace{1cm}
			\includegraphics[scale=0.12]{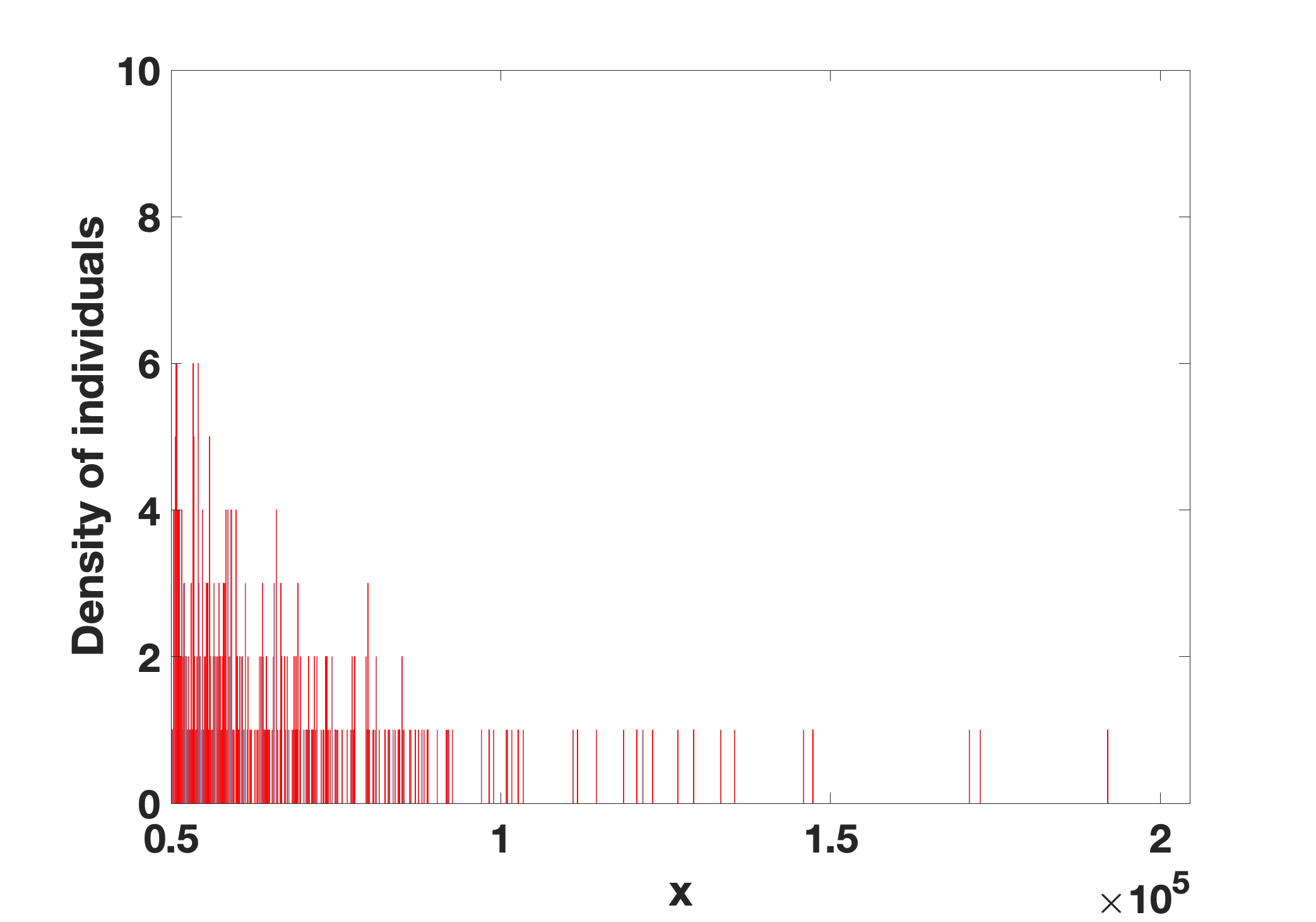}\\
		\end{center}
		\caption{\textit{In this figure, we zoom on the distribution for $t=100$ in Figure \ref{Fig4} (d).  The figure on the right-hand side corresponds to the yellow region in the left figure.  }}\label{Fig4A}
	\end{figure}
	
	\begin{figure}
		\begin{center}
			\textbf{(a)} \hspace{5cm} \textbf{(b)}\\
			\includegraphics[scale=0.12]{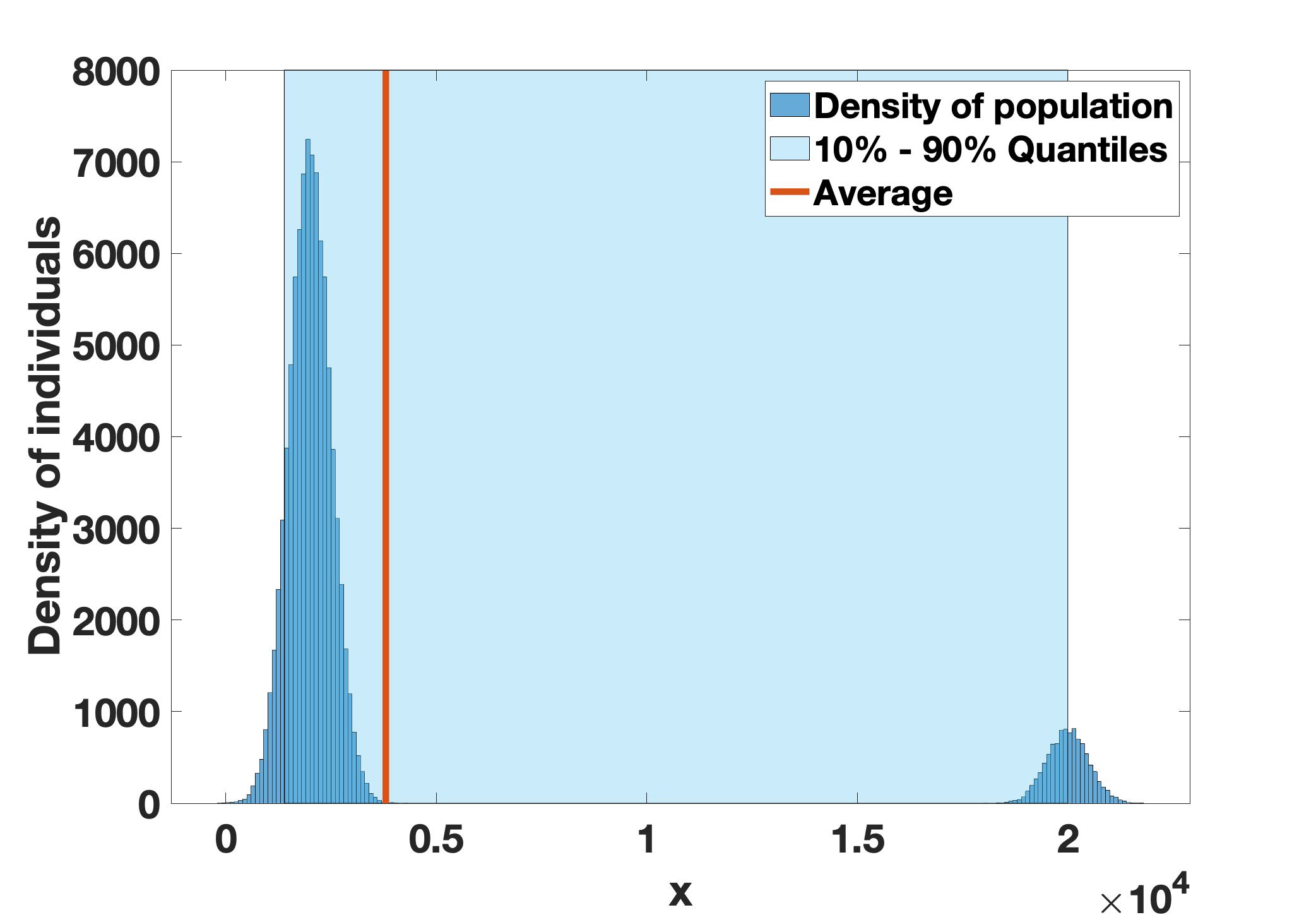}  \hspace{1cm}
			\includegraphics[scale=0.12]{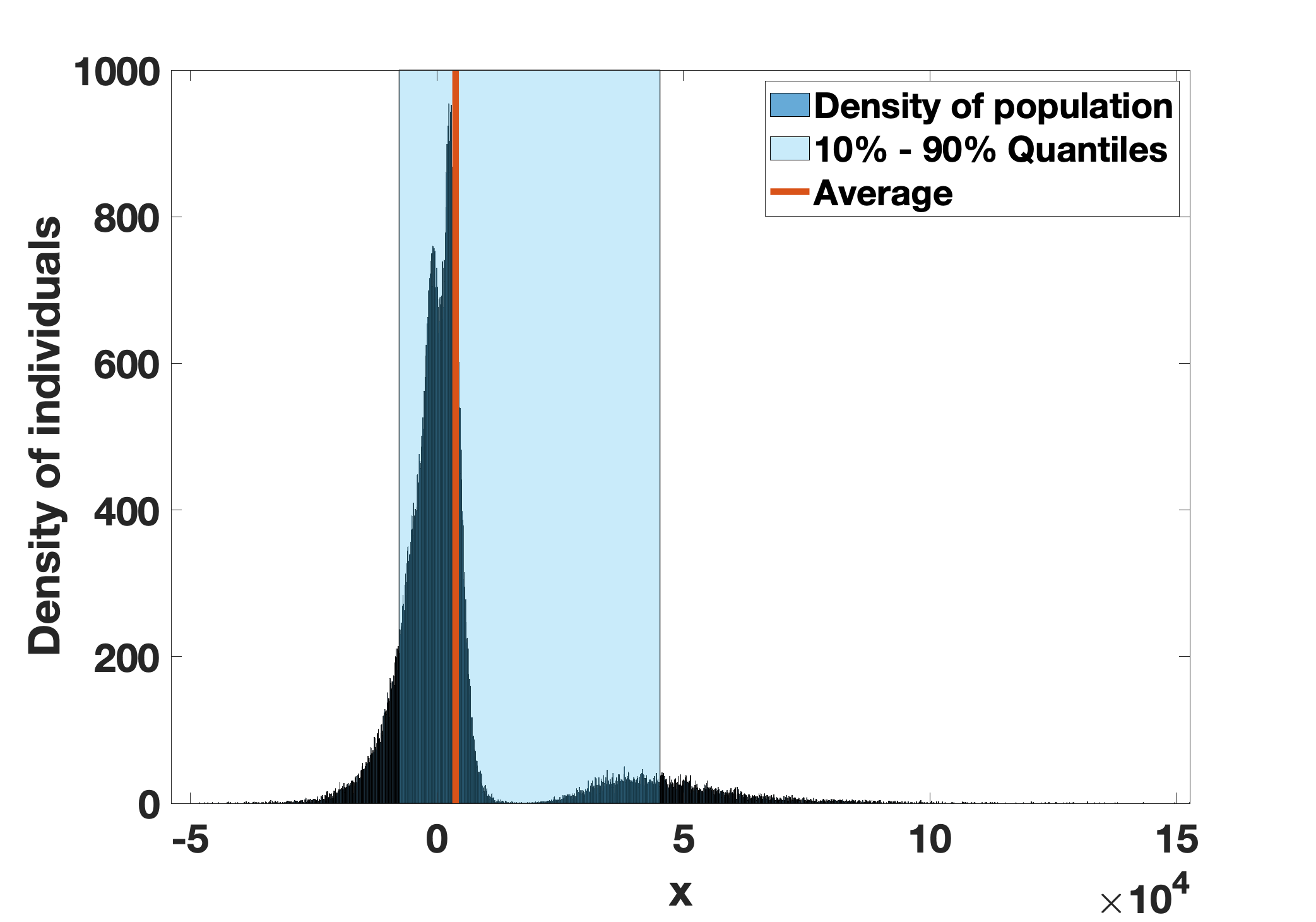}\\
			\textbf{(c)} \hspace{5cm} \textbf{(d)}\\
			\includegraphics[scale=0.12]{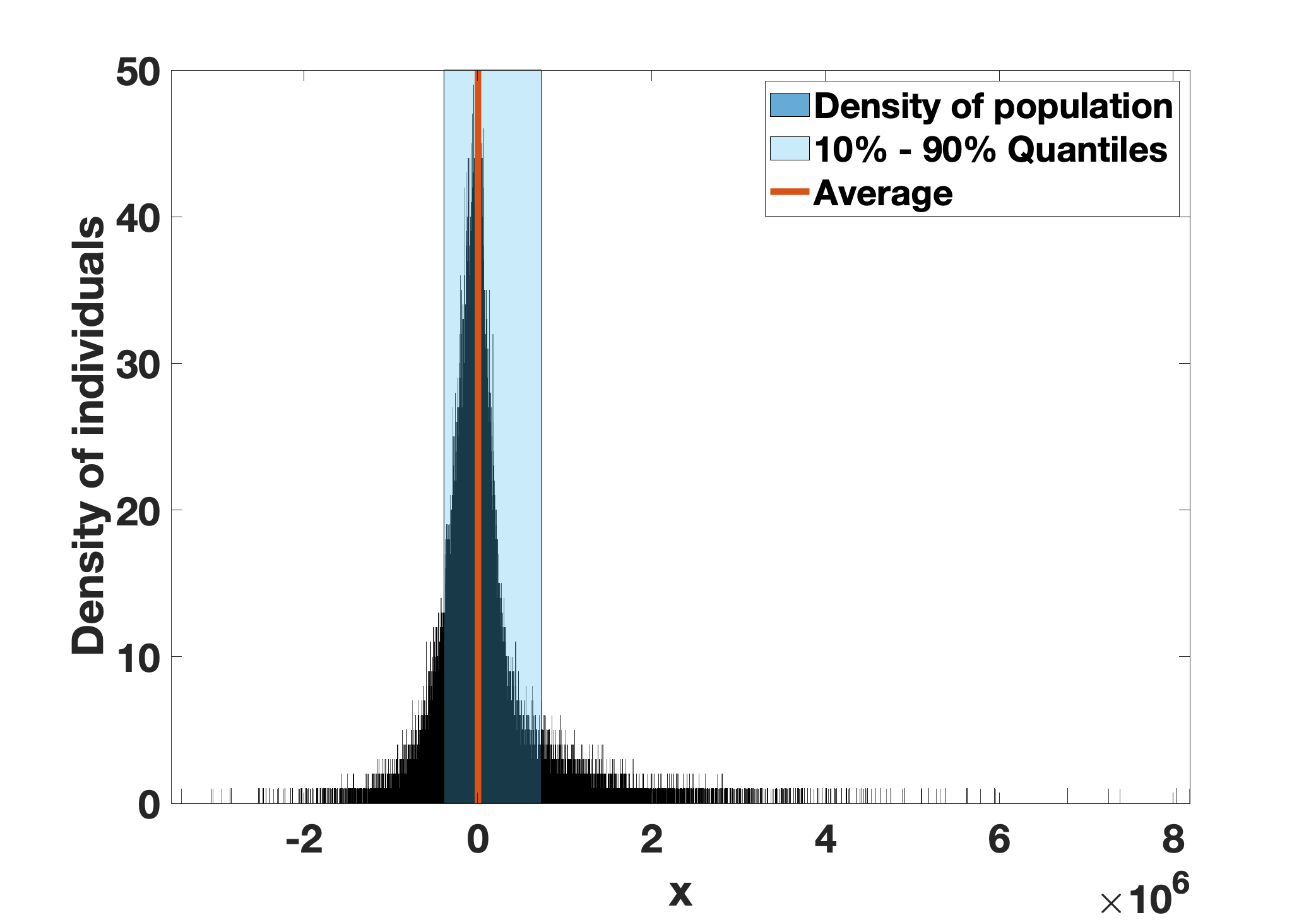}  \hspace{1cm}
			\includegraphics[scale=0.12]{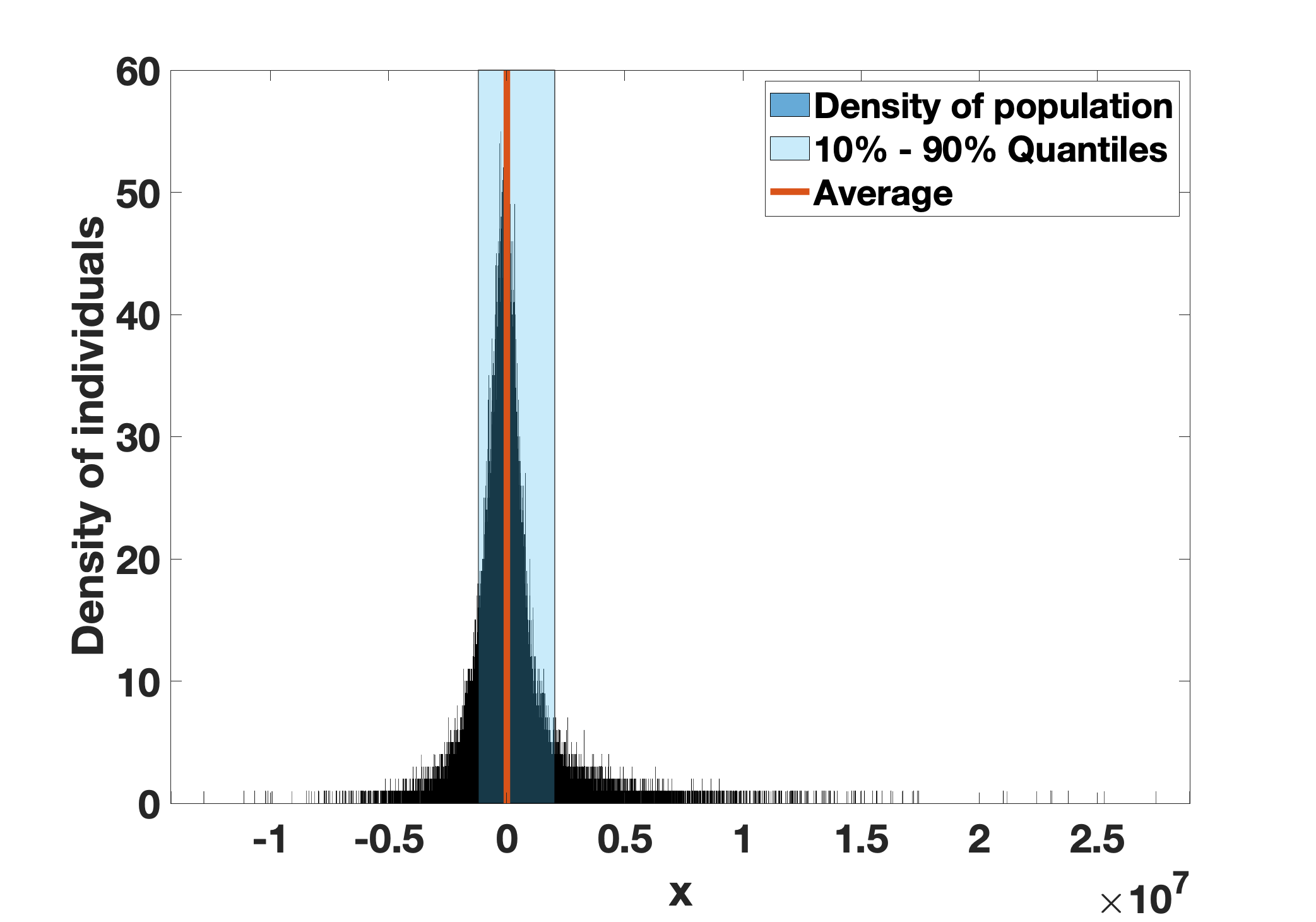}
		\end{center}
		\caption{\textit{In this figure, we use $p=0$ (i.e.   $0 \%$ RH model and $100\%$ SN model),  $f_1=f_2=0.1$, $1/\tau=1$ years. We start the simulations with $100 \, 000$ individuals. The figures (a) (b) (c) (d) are respectively the initial distribution at time $t=0$, and the distribution  $10$ years, $50$ years and $100$ years.     }}\label{Fig5}
	\end{figure}

	\begin{figure}
		\begin{center}
			\includegraphics[scale=0.12]{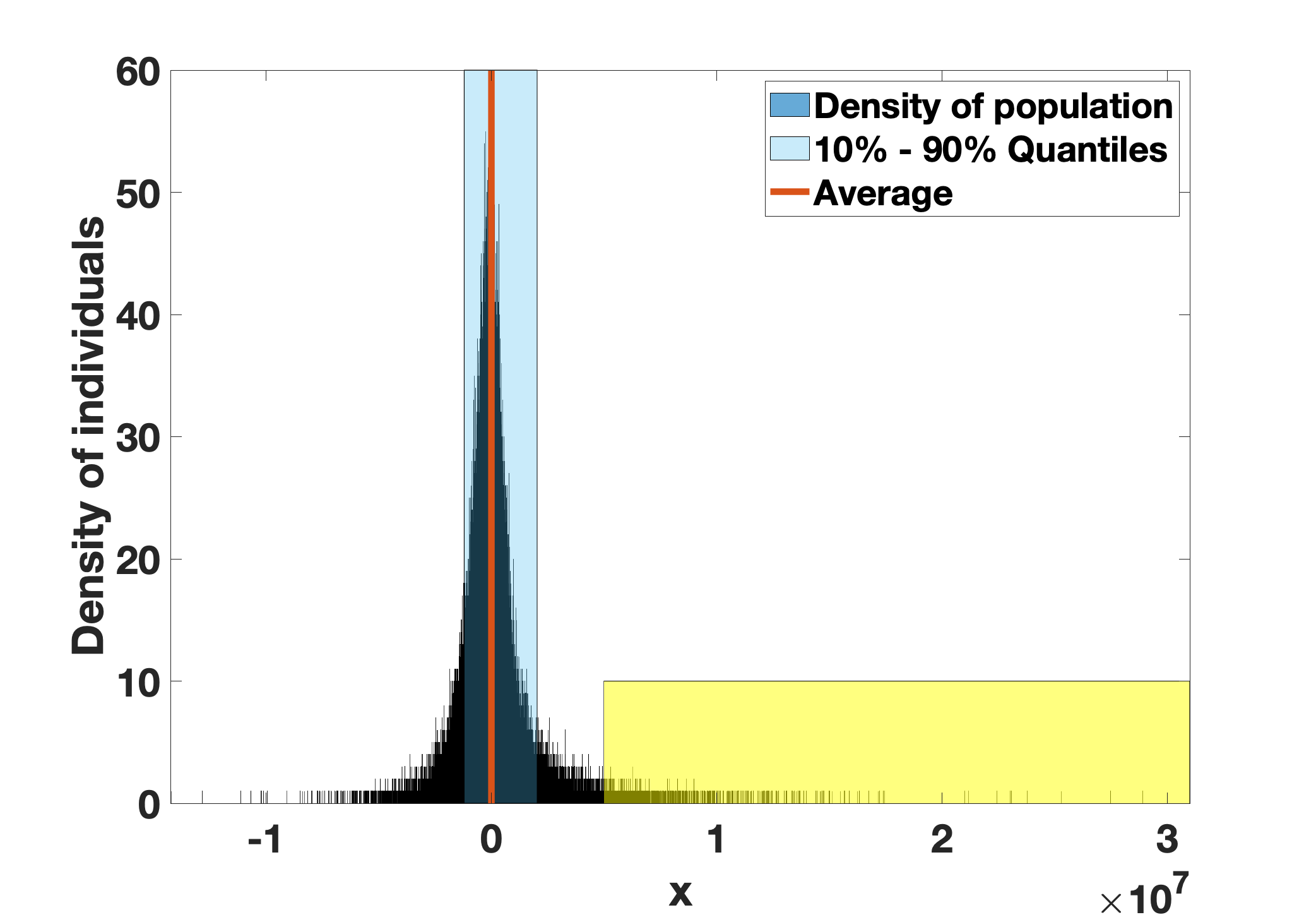} \hspace{1cm}
			\includegraphics[scale=0.12]{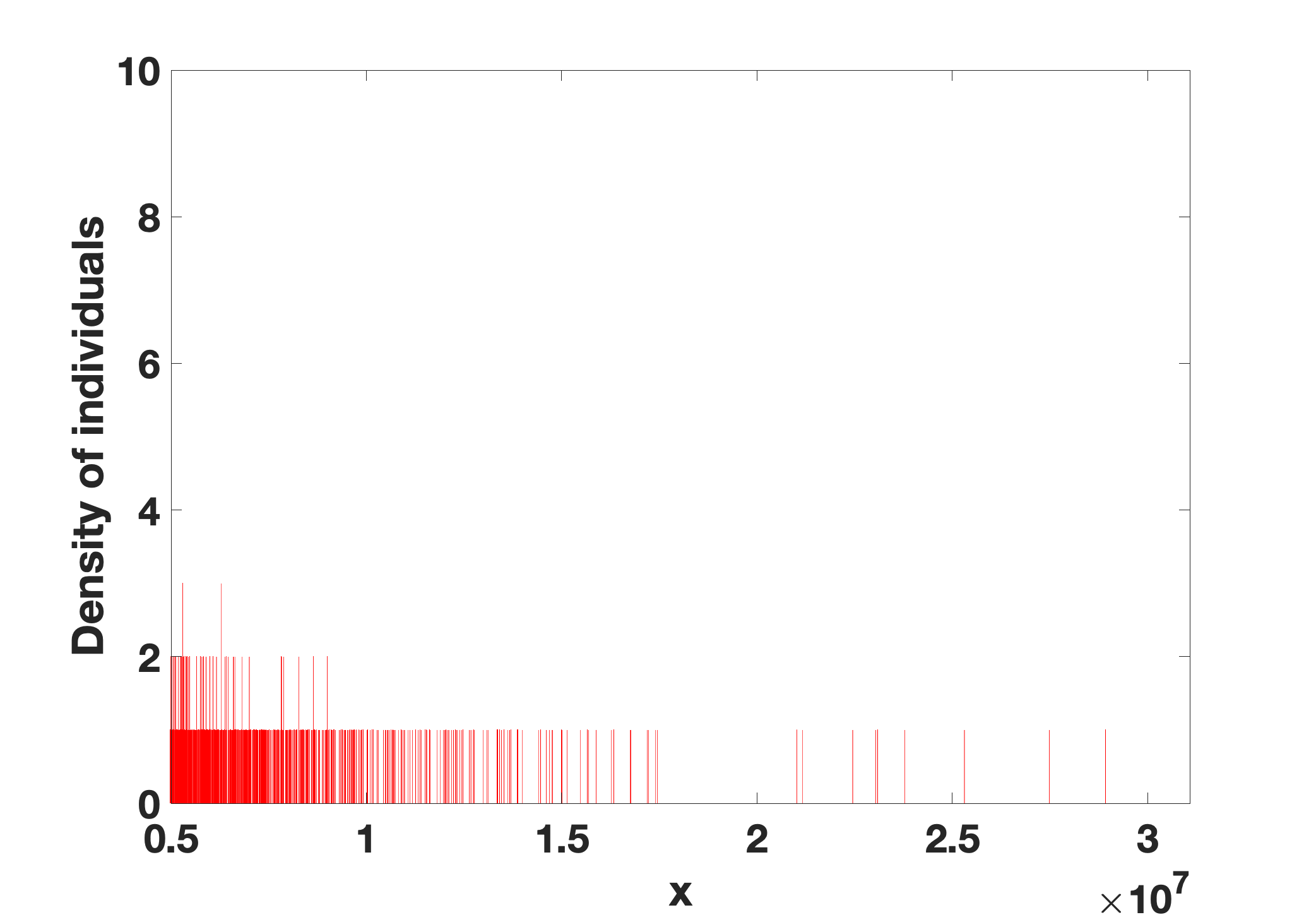}\\
		\end{center}
		\caption{\textit{In this figure, we zoom on the distribution for $t=100$ in Figure \ref{Fig5} (d).  The figure on the right-hand side corresponds to the yellow region in the left figure.  }}\label{Fig5A}
	\end{figure}
	
	Figure  \ref{Fig3} corresponds to the full RH model which corresponds to $p=1$. In that case, the population density converges to a Dirac mass centered at the mean value. That is, everyone will ultimately have the same amount of transferable quantity.
	
	\medskip 
	Whatever the value of $p$ strictly less than $1$, the simulations can be summed up by saying that ``there is always a sheriff in town''. In  Figures \ref{Fig4}-\ref{Fig5}, the unit for $x$-axis changes from (a) to (d).  We can see from that some rich guys will always become richer and richer. The SN model induces competition between the poorest individuals also, and the population ends up after 100 years with a lot of debts. In other words, the richest individuals are becoming richer, while the poorest are becoming poorer. The effect in changing the value of the parameter  $p \in [0,1)$ is strictly positive, it seems that it is only a matter of time before we end up with a very segregated population.  We observe a difference for the richest of two orders of magnitude between the case $p=0.5$ and $p=0$. We conclude by observing that the smaller $p$ is, the more the wealthiest individuals are rich.
	This observation has another, slightly philosophical interpretation, that enormous amounts of wealth can be concentrated within a very limited number of individuals in a very simple random exchanges model, as soon as the exchanges have the slightiest bias towards the rich. We end up with a situation in which most of the population is very poor while a few individuals are very rich. This catastrophic state is due to an advantage in trading based only on the initial wealth of each individual.
	
	\vspace{1cm}
	
	\medskip 	
	\noindent \textbf{Data availability:} No data were produced for this study. 
	
	\medskip 
	\noindent \textbf{Conflict of interest:} The authors declare no conflict of interest. 

	\bibliographystyle{plain}
\bibliography{sn-bibliography}

\includepdf[pages=-]{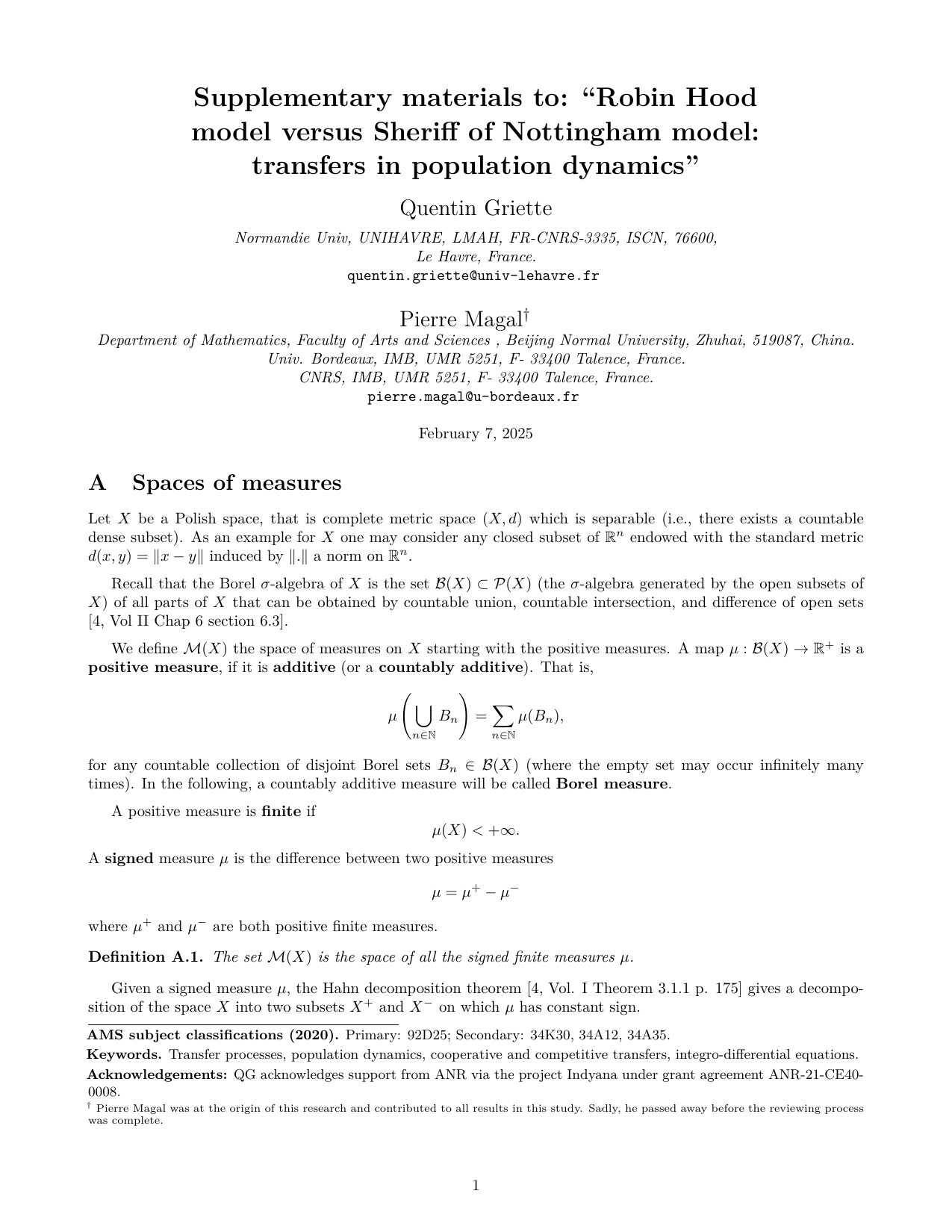}
\end{document}